\newcommand{\inv}[1]{\frac{1}{ #1 }}
\newcommand{\R}{\mathbb{R}}
\newcommand{\Ca}[1]{\mathcal{#1}}
\theoremstyle{plain}
\newtheorem{theorem}{Theorem}[section]
\newtheorem{lemma}[theorem]{Lemma}
\newtheorem{prop}[theorem]{Proposition}
\newtheorem{definition}{Definition}
\newtheorem{remark}[theorem]{Remark}
\theoremstyle{definition}
\begin{document}
\title{\small Optimal double control problem for a PDE model of goodwill dynamics}
%% use optional labels to link authors explicitly to addresses:
%% \author[label1,label2]{}
%% \address[label1]{}
%% \address[label2]{}

\author{Dominika Bogusz,
Mariusz G\'{o}rajski}

\address{\noindent Dominika Bogusz,  \url{dominika.bogusz@uni.lodz.pl},  Department of Econometrics, Faculty of Economics and Sociology, University of \L \'{o}d\'{z}, \L \'{o}d\'{z}, Poland. \newline \newline \hspace*{0.3cm} Mariusz G\'{o}rajski,
 \url{mariuszg@math.uni.lodz.pl}, 
Department of Econometrics, Faculty of Economics and Sociology, University of \L \'{o}d\'{z}, \L \'{o}d\'{z}, Poland.}

%\begin{frontmatter}

%% Title, authors and addresses

%% use the tnoteref command within \title for footnotes;
%% use the tnotetext command for theassociated footnote;
%% use the fnref command within \author or \address for footnotes;
%% use the fntext command for theassociated footnote;
%% use the corref command within \author for corresponding author footnotes;
%% use the cortext command for theassociated footnote;
%% use the ead command for the email address,
%% and the form \ead[url] for the home page:
%% \title{Title\tnoteref{label1}}
%% \tnotetext[label1]{}
%% \author{Name\corref{cor1}\fnref{label2}}
%% \ead{email address}
%% \ead[url]{home page}
%% \fntext[label2]{}
%% \cortext[cor1]{}
%% \address{Address\fnref{label3}}
%% \fntext[label3]{}

\begin{abstract}
We propose a new optimal model of product goodwill in a segmented market where the state variable is described by a partial differential equation of the Lotka--Sharp--McKendrick type. 
In order to maximize the sum of discounted profits over a  finite time horizon, we control the advertising efforts which influence the state equation and the boundary condition. 
Moreover, we introduce the mathematical representation of consumer recommendations in a segmented market. Based on the semigroup approach, we prove the existence and uniqueness of  optimal controls. Using a  maximum principle, we construct a numerical algorithm to find the optimal solution. Finally, we examine several simulations on  the optimal goodwill model and discover two types of advertising strategies.
\end{abstract}

\keywords{
Lotka--Sharp--McKendrick PDE, existence of an optimal solution, evolution equation, product goodwill, advertising strategy, consumer recommendation} 

%% PACS codes here, in the form: \PACS code \sep code
\maketitle

AMS 2000 subject classification: 90B60,   49J20; JEL C61, D42, M37.
%% MSC codes here, in the form: \MSC code \sep code
%% or \MSC[2008] code \sep code (2000 is the default)

%% \linenumbers

%\newtheorem{remark}{Remark}[section]
%\newtheorem{definition}[theorem]{Definition}
%\newenvironment{proof}[1][Proof]{\noindent\textbf{#1.} }{\ \rule{0.5em}{0.5em}}

\section{Introduction}
The subject matter of the present paper is an optimal boundary control problem of product goodwill with double controls. The state variable $G$ is described  by a partial differential equation of Lotka--Sharp--McKendrick type \footnote{Known also as the von Foerster equation} of the form
\begin{align}\label{1}\left\{\begin{array}{lc}
\frac{\partial G(t,a)}{\partial t}+\frac{\partial G(t,a)}{\partial a}+\delta(a) G(t,a)=u^\rho(t,a)\medskip & (t,a)\in\left[0,T\right]\times\left[0,1\right],\\
G(t,0)=\int_0^{1}\left(R(a)G(t,a)+u^\rho(t,a)\right)da+u^\rho_0(t)& t\in\left[0,T\right], \\
G(0,a)=G_0(a) & a\in\left[0,1\right].\end{array}\right. 
\end{align}
Here, $G(t,a)\geq 0$ is the product goodwill at time $t$ for consumer segment $a$, where $a\in [0,1)$ equates with the consumer usage experience, $u(t,a),u_0(t)\geq 0$ are the advertising efforts at time $t$ directed to consumer segment $a$ and to new consumers, respectively, $\rho\in(0,1]$, $R(a)\geq 0$ is  the rate of consumer recommendation for consumers with usage experience $a$ and $\delta(a)\geq 0$ is the depreciation rate of the product goodwill in consumer segment $a$.  We shall present the assumptions about \eqref{1} in Section \ref{s:Goodwill equation}. %The state equation in this problem  is a first-order non-linear hyperbolic partial differential equation which includes double controls $u_0, u$. %An associated boundary condition has an integral form and it also depends on state and control variables.??  
Our  aim  is to choose advertising strategies $u_0$ and $u$ that maximize the sum of the discounted profits in the horizon $T>0$,
 \begin{align}\label{J}
J(G,u_0,u)=\int_0^{1}\int_0^{T}e^{-r t}\left(K\cdot G^{\gamma}(t,a)-\frac{\beta}{2}(u^2(t,a)+u_0^2(t))-c_f\right)dtda, \quad r, K,\beta, \gamma >0
\end{align}
over all admissible controls $(u_0,u)\in U_{0,ad}\times U_{ad}\subset L^2(0,T)\times L^2((0,1)\times(0,T))$ and  subject to the state equation \eqref{1}.

%This paper is aimed at studying an optimal double control model describing the dynamics of a product goodwill by using the Lotka-Sharp-McKendrick type partial differential equation \eqref{1}.
We examine the evolution of the product goodwill in a market divided into segments by the consumer experience using the product.   This usage experience reflects a consumer's perceptions, responses, attitudes, and emotions about using a particular product and has a strong influence over purchasing decisions. For these reasons it is commonly used by companies in creating consumer targeted  offers. 
As far as we know, this type of market segmentation has not previously been included in goodwill models.

  Moreover, since the empirical studies summarized by Bagwell in \cite{bagwell2007})  indicate the existence of decreasing returns to advertising efforts, we include this observation in the new model and we assume a non-linear relation between advertising and goodwill. A similar assumption was used by Weber in \cite{weber2005} and by Mosca and Viscolani in \cite{mosca2004} in a goodwill model expressed by an ordinary differential equation. In addition, we have expanded the existing models by allowing the depreciation of goodwill to be  non-constant, but rather heterogeneous with respect to the usage experience of the product. The main difference between the existing models and that presented in this paper is the process of building the goodwill among consumers with no usage experience. We assume that  goodwill on the part of new consumers depends on advertising directed exclusively to this segment, and by consumer recommendations which can be amplified by advertising aimed at consumers with some experience. Consumer recommendations are considered by most consumers as the most trusted source of information about products. Therefore, they are taken into account in modelling the sales of many products (for example \cite{monahan1984}) but so far, as far as we know, have not yet been taken into account in models describing the dynamics of goodwill involving firms operating in a segmented market.  Our idea of using consumer recommendations in modelling goodwill is based on the empirical evidence, see, for example, \cite{bruce2012} and \cite{agliari2010}, in which the authors claim that consumer recommendations have a strong influence on the level of goodwill.

A general class of optimal control models with heterogeneous state variables which include age structured systems is introduced in \cite{veliov2008} %in the form of a first order distributed system with Lipschitz continuous dependence on controls variable 
and the existence and uniqueness of an optimal solution is proved. 
In our  goodwill equation \eqref{1}, the dependence on the controls is not Lipschitz continuous, hence the existence result from \cite{veliov2008} can not be applied directly. In the Lasiecka and Triggiani monograph \cite{lasiecka2000control} a substantial presentation of the control theory for  the hyperbolic evolution equations with quadratic cost functionals is given.
Following the semigroup approach (see also \cite{Pazy1983, daprato1994}) we prove the existence and uniqueness of an optimal solution to \eqref{J}--\eqref{1} (see Theorem \ref{thm5}).  In  Theorem \ref{thm3} shows that the semigroup-based generalised mild solution to \eqref{1} (see Definition \eqref{df3}) satisfies the definition of solution on the characteristic lines from \cite[Definition 1]{Feichtinger2003}. Hence we are able to use the maximum principle from \cite{Feichtinger2003} to construct a numerical solution to the optimal control problem.

The remainder of this paper is organized as follows. Section \ref{s:lit} briefly reviews the literature on the economic applications of optimal control problems with a state equation described by a first-order hyperbolic partial differential equation.  Section \ref{s:Goodwill equation} presents the new model of product goodwill discusses the economic background of the new idea of market segmentation based on usage experience, giving a mathematical description of consumer recommendations in a segmented market. Section \ref{s:sol} proves the existence and uniqueness of a generalised mild solution to \eqref{1}. Section \ref{s:optimal_sol} establishes the existence and uniqueness of an optimal solution to the goodwill model and presents the necessary optimality  conditions. Section \ref{s:simulation} presents the results of  simulations of the optimal goodwill model obtained by means of a numerical method from Section \ref{s:numerical} .

\section{Literature review}
\label{s:lit}
The Lotka--Sharpe--McKendrick equation provides a framework for the mathematical modelling of many real world phenomena. The most popular application of the equation is a description of age-structured population dynamics   with a boundary condition describing the reproduction process of the population. Population dynamics with an appropriate goal functional is of interest for many biological issues, such as harvesting and birth control (see \cite{Chan1989, daprato1994, park1998optimal, anita2000analysis} and references therein). 
%The necessary optimality condition for age-structured population double control problem with fixed and free horizon are given in \cite{he2009double} by means of Doubovitskii-Milyutin theorem.

%, epidemic disease control \cite{li2008}?? or optimal vaccination \cite{castillo1998}??. 
 
Optimal boundary control problems for hyperbolic systems are often used to describe phenomena in the economic and social sciences. An example might be a model for drug initiation including the age distribution of the drug users \cite{almeder2004} or the capital accumulation process in a vintage-capital framework \cite{barucci2001, feichtinger2006}. In recent years, this type of model has been employed in marketing science because researchers have recognized the increasing importance of market segmentation. It has been emphasized that market segmentation strategies improve a company's competitive position and allow better serving the needs of the customers (see \cite{Jha2009}). Moreover, marketing tools, such as advertising, that take into account the specificity of the target group in a particular market segment are more efficient and may also increase the enterprise's profits (see \cite{Mcdonald2004}).

Market segmentation is also applied  to the study of the concept of goodwill, something that has become more and more important in modern business management. Goodwill refers to the difference between the price paid by the buyer for the company and the book value of the assets of that company. This value may be created by the positive experiences of its clients, and may be improved by investment in advertising and other marketing tools. Thus goodwill  translates to an enhancement in the competitiveness of the company and to the acquisition of  future earning power \cite{canibano2000}. Many times it can be observed that a company making a loss is bought at a high price because of its well-known brands. Some real examples of this type of merger and acquisition may be found in \cite[p. 18]{kapferer2012}. Although many researchers have studied this phenomenon, there are still some gaps that prevent a full understanding of the nature of the dynamics of goodwill. 

Modeling is one of way to explore the properties of company goodwill. Nerlove and Arrow in 1962 took the first steps in modeling the concept of goodwill. They interpreted goodwill as the part of the demand for products that is created by current and past advertising efforts (see \cite{Nerlove1962}),  and assumed that the stock of goodwill depreciates over time at a constant rate and depends positively on the advertising effort. They described the dynamics of goodwill in a non-segmented market by an ordinary differential equation. 

The model proposed by Nerlove and Arrow has been modified and analysed by many scientists, who have  recently taken into account market segmentation. They often assume that the firm sells one product in infinitely many segments, indicated by the age of the customers $a$, and the demand in segment $a$ and time $t$ depends on the level $G(t,a)$ of goodwill for this product.  This assumption results in the representation of the goodwill dynamics by a first-order hyperbolic partial differential equation. One example of this approach is \cite{Grosset2005}, who analyse the dynamics of goodwill with a first-order hyperbolic partial differential equation in which a control variable (i.e. advertising efforts) linearly influences the goodwill in the state equation. Newly, Faggian and Grosset in \cite{Faggian2013} reflect the situation in which a firm wants to promote optimally and sells a single product in an age-segmented market, over an infinite time horizon.  In that model, the influence of advertising on goodwill takes a similar form as in the previously mentioned paper.  The same state equation but with a different interpretation  is proposed by Barucci and Gozzi \cite{Barucci1999}. They consider also a goodwill model with market segmentation and describe a  monopolistic firm selling infinitely many products with new goods continuously launched onto the market. In that paper, the control variables represent advertising rates and they appear in the state equation and the boundary condition. However, the boundary condition does not depend on the goodwill variable.

\section{Optimal goodwill model with consumer recommendations}
\label{s:Goodwill equation}

We shall consider a firm in a market with a monopolistic structure divided into segments by the consumers' usage experience $a\in[0,1)$. More precisely,  
the variable $a$ indicates the time spent using the product. The segment $a=0$ includes consumers who have already purchased the product. The maximal usage experience is normalized to the value $1$.  This means that consumers  in segment $a=1$ leave the market forever. The length of the product life cycle is equal to $T$. In each segment $a$ and at each moment of time $t\in[0,T]$ we consider the product goodwill $G(t,a)$ defined the same as in \cite{Nerlove1962}. In order to formalize the concept of consumer recommendation, we assume that $G(t,a)$ is equal to the number of consumers who have been using the product for $a\in[0,1)$ units of time and they continue buying the product at time $t\geq 0$ as the effect of advertising. The firm is able to stimulate different levels of product goodwill by advertising efforts. As we mentioned in the Introduction, the controls $u(t,a)$ and $u_0(t)$  represent the intensiveness of  the advertising efforts at time $t$ directed to consumer segment $a$, and to new consumers, respectively. In our model, we assume a non-linear effect of advertising on goodwill, more precisely, we consider the parameter $\rho$ which reflects the non-linear-concave shape of the advertising response function if $\rho\in(0,1)$ or a linear advertising response function if $\rho=1$. Therefore, $u^\rho(t,a)$ and $u^\rho_0(t)$ positively influence the product goodwill $G(t,a)$ in segment $a$ and the level of product goodwill $G(t,0)$ of new consumers, respectively. Furthermore, there is a natural depreciation rate of goodwill $\delta(a)\geq 0$, different for each consumer segment $a$. This expresses a situation in which the depreciation rate depends on the time spent using the product, and it is natural for an experience product (see \cite{nelson1974}). For this type of goods during the use of the product, consumers  learn about its features and they may update their judgement about it.   This results in changes in the depreciation rate of the goodwill. Therefore, the dynamics of the goodwill are governed by the following PDE:
\begin{align*}
\frac{\partial G(t,a)}{\partial t}+\frac{\partial G(t,a)}{\partial a}+\delta(a) G(t,a)&=u^\rho(t,a)\medskip \quad (t,a)\in\left[0,T\right]\times\left[0,1\right].
\end{align*}

The main novelty in the presented model of goodwill is in the construction of goodwill in the segment of new consumers. A number of empirical studies  of consumer recommendation have concluded that they are a credible source of information  \cite{Brown1987,murray1991}, in particular for consumers without any experience in using the product. Therefore they reduce the risk of purchase decisions and facilitate consumer choice (see \cite{trusov2009}). Hence, the value of goodwill in the segment of new consumers $G(t,0)$ is influenced by the recommendations by consumers with some usage experience. For reasons of clarity, $N(t,a)$ represents consumers who wish to buy the good for the first time at time $t$. This willingness stems from the positive consumer recommendations coming from segment $a$. We distinguish two disjoint groups $N_1(t,a)$ and $N_2(t,a)$ of new consumers affected by recommendations, so that $N(t,a)=N_1(t,a)+N_2(t,a).$ Moreover, by $R(a)$ we denote the relative rate of consumer recommendation in segment $a$  defined  as the ratio $$R(a)=\frac{N_1(t,a)}{G(t,a)}$$ between the number of  new consumers in the first group who want to buy the product influenced by consumers with usage experience $a$, and the total number of consumers in segment $a$.  The consumer recommendations are closely connected with the product quality, which is assumed to be constant and results in that the share $R(a)$ is time homogeneous. On the other hand, usually the quality of the product can only be recognised after some amount of time spent using the product (see \cite{godes2004}), therefore, the rate of consumer recommendation $R(a)$ is heterogeneous with respect to usage experience $a$. Thus the number of new consumers in  the first group is equal to
$$N_1(t,a)=R(a)G(t,a).$$ 

Advertising efforts $u(t,a)$ influence not only the level of goodwill $G(t,a)$ but also the strength of consumer recommendations in segment $a$ by reminding consumers of the reasons for a positive judgement of the product, and thus encouraging them to share their opinion about the product with potential consumers \cite{keller2009}. In conclusion,  advertising efforts $u(t,a)$ act as a reinforcement of the effectiveness of consumer recommendations in  segment $a$. As a result, a new group  of people $N_2(t,a)$ buy the product, which can be calculated by 
$$ N_2(t,a)=\frac{u^\rho(t,a)}{G(t,a)}G(t,a)=u^\rho(t,a),$$
where $\frac{u^\rho(t,a)}{G(t,a)}\eqsim\frac{\Delta G(t,a)}{G(t,a)}$ is the rate of advertising effectiveness in consumer generation $a$.

Finally, we obtain that the  number of new consumers who buy the product at time $t$ as a result of consumer recommendations is equal to
\begin{align*}
\int_0^1 N(t,a)da=\int_0^1(N_1(t,a)+N_2(t,a))da=\int_0^1(R(a)G(t,a)+u^\rho(t,a))da.
\end{align*}
The value of goodwill $G(t,0)$ in the segment of new consumers is also affected by an advertising campaign $u_0(t)$ directed at consumers without any usage experience. Hence, adding the effect of consumer recommendations and advertising effort, we obtain
\begin{align}
\label{2}
G(t,0)=\int_0^{1}\left(R(a)G(t,a)+u^\rho(t,a)\right)da+u^\rho_0(t).
\end{align}
From the above considerations, we obtain  the dynamics of goodwill given by \eqref{1}. 
 
%\item[\textbf{A2}] 
For the maximal advertising intensity (possibly infinite) $I\in(0,\infty]$ denote the sets of admissible controls by $$U_{ad}=\{u\in L^{\infty}((0,T)\times(0,1)): 0 \leq u(t,a)\leq I\ \textrm{ for a.e. }(t,a)\in [0,T]\times[0,1]\},$$ and $$U_{0,ad}=\{u_0\in L^{\infty}(0,T): 0 \leq u_0(t)\leq I\ \textrm{ for a.e. } t\in [0,T]\}.$$

In order to fully describe the optimal control problem, we now define a goal functional. For this purpose, consider a firm  acting  in a monopolistic market who wants to choose one advertising strategy from the admissible strategies to maximize the sum of discounted profits in the horizon $T$ of the form
\begin{align}
J(G,u_0,u)&=\int_0^{1}\int_0^{T}e^{-r t}\left(K\cdot G^{\gamma}(t,a)-\frac{\beta}{2}(u^2(t,a)+u_0^2(t))-c_f(t)\right)dtda
\end{align}
for $(u_0,u)\in U_{0,ad} \times U_{ad}$, where $K, c_f(t)>0$, $\gamma\in(0,1]$, and $r>0$ is  the rate of interest.

% In the sequel we assume that:
Throughout the paper we assume that
%The following assumption will be needed throughout the paper. 
%\begin{description}
%\item[\textbf{A0}] $G_0(a)> 0$ for a. e. %$a\in[0,1]$ and  $G_0\in L^\infty(0,1)$. A1
$R\colon [0,1]\rightarrow [0,\infty)$ belongs to $ L^\infty(0,1)$, $\delta\colon [0,1]\rightarrow [0,1]$ is a measurable function such that
\begin{align}\label{expstability}
\int_0^{1}R(a)e^{-\int_0^{a}\delta(s)ds}da<1.
\end{align}

%\begin{description}
%\item[A3] the functions $\alpha\colon [0,T]\times[0,1]\rightarrow[0,\infty)$, $\beta\colon [0,T]\rightarrow[0,\infty)$, $\gamma\colon [0,T]\rightarrow[0,\infty)$ belong to  $L^{\infty}(0,T)$ and there exists a constant $c>0$ such that for any $t\in [0,T]$ $$\beta(t)\geq c,\quad \gamma(t)\geq c.$$
%\end{description}
\begin{definition}\label{d:optimalsol}
The triple $(G^*,u_0^*,u^*)$ is an optimal solution to the problem of maximizing \eqref{J} subject to \eqref{1} if  $G^*$ is  a generalised mild solution (see Definition \ref{df3}) to \eqref{1} with $(u_0^*,u^*)\in U_{0,ad}\times U_{ad}$  and $$J(G^*,u_0^*,u^*)\geq J(G,u_0,u)$$ holds for any admissible controls $(u_0,u)\in U_{0,ad}\times U_{ad}$ and $G$ satisfying \eqref{1}.
\end{definition}
In the next section, we will prove that for any $(u_0,u)\in U_{0,ad}\times U_{ad}$ there exists a generalised mild solution to \eqref{1} (Theorem \ref{thm2} in Section \ref{s:sol}).

\section{Existence and uniqueness of the solution to the goodwill equation}
 \label{s:sol}
In order to  prove existence and uniqueness of  \eqref{1}  we use the semigroup approach and the notion of a generalised mild solution to \eqref{1}.

\subsection{Reformulating the goodwill equation to first order  system with homogeneous boundary conditions}

%We show that for any fixed $u$ and $u_0$ the equation \eqref{1} possesses a unique generalised mild solution (Theorem \ref{thm2}). 
Now, we transform the problem \eqref{1} (similarly to \cite{daprato1994}) into a problem with homogeneous boundary conditions \eqref{eqQ}. First, we denote by
\begin{align}\label{w}
w(t)=\int_0^{1}u^\rho(t,a)da+u^\rho_0(t),\quad t\in[0,T]
\end{align}
the controls from the boundary condition of \eqref{1}.
%\begin{remark}\label{r:1}
%Notice that 
%from \eqref{w} we are able to reformulate the boundary condition in \eqref{1} as \begin{align}\notag
%G(t,0)=\int_0^{1}R(a)G(t,a)da+w(t).
%\end{align} 
%\end{remark}
\begin{remark}
If $(u_0,u)\in U_{0, ad}\times U_{ad}$, then the function $w\colon [0,T]\rightarrow[0,\infty)$ belongs to $L^{\infty}(0,T)$.
\end{remark}
Denote
\begin{align}\label{D}
D(a)=e^{-\int_0^{a}\delta(s)ds},\quad a\in[0,1]
\end{align}
the future value in time $a$ of $1$ unit of goodwill in segment of new consumers. Moreover, let
\begin{align} \label{Q}
Q(t,a)=G(t,a)-g(t,a),\quad (t,a)\in[0,T]\times[0,1] \text{ a.e.},
\end{align}
where for any fixed $t\in[0,T]$ the function $g(t,\cdot)$ is a solution to the equation
\begin{align}\label{g}
\left\{\begin{array}{ll}
\frac{\partial g(t,a)}{\partial a}=-\delta(a)g(t,a) & (t,a)\in[0,T]\times[0,1],\\
g(t,0)=\mu w(t),
\end{array}\right.
\end{align}
where \begin{align}\label{mu}\mu=\frac{1}{1-\int_0^{1}R(a)D(a)da}>1
\end{align}
by \eqref{expstability}. Then, the solution of \eqref{g} has the following form
\begin{align}\label{sol_g}
 g(t,a)=\mu w(t)D(a).
\end{align}

\begin{theorem}\label{thm1} Assume \eqref{expstability} holds and $u\in U_{ad}$, $u_0\in U_{0,ad}$ are positive-valued, continuous functions such that $\frac{\partial u}{\partial t}$ and $\frac{d u_0}{dt}$ are  continuous. Moreover, let $\delta$ be a continuous and equations \eqref{Q} and \eqref{g} hold. Then,
$G$ is a classical solution \footnote{A function $G\colon [0,T]\times[0,1]\to \mathbb{R}$ is called a classical solution to \eqref{1} if $G\in C^1\left([0,T]\times[0,1]\right)$ and satisfies the equation \eqref{1} for all $(t,a)\in[0,T]\times[0,1]$.} to \eqref{1} if and only if $Q\in C^1\left([0,T]\times[0,1]\right)$ satisfies the following equation
\begin{align}\label{eqQ}
\left\{\begin{array}{ll}
\frac{\partial Q(t,a)}{\partial t}+\frac{\partial Q(t,a)}{\partial a}=-\delta(a)Q(t,a)+u^\rho(t,a)-\mu D(a)w'(t) & (t,a)\in[0,T]\times[0,1],\\
Q(t,0)=\int_0^{1}R(a)Q(t,a)da & t\in[0,T],\\
Q(0,a)=G_0(a)-\mu w(0)D(a) & a\in[0,1].
\end{array}\right.\end{align}
\end{theorem}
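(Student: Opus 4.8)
The plan is to prove the equivalence by a direct substitution argument, exploiting the explicit formula \eqref{sol_g} for $g$. Since $Q = G - g$ and $g(t,a) = \mu w(t) D(a)$ is, under the smoothness hypotheses on $u,u_0,\delta$, a $C^1$ function of $(t,a)$, the map $G \mapsto Q = G - g$ is an affine bijection between $C^1([0,T]\times[0,1])$ and itself. Hence it suffices to check that, term by term, $G$ solves \eqref{1} if and only if $Q$ solves \eqref{eqQ}. First I would record the elementary facts needed: $D \in C^1([0,1])$ with $D'(a) = -\delta(a)D(a)$ and $D(0)=1$ (immediate from \eqref{D}), and $w \in C^1([0,T])$ (since $u^\rho$ and $u_0^\rho$ are $C^1$ in $t$ because $u,u_0$ are positive and $C^1$ — this is where positivity of the controls is used, to differentiate the $\rho$-th power), with $w' $ as in the forcing term of \eqref{eqQ}.

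Next I would substitute $G = Q + g$ into each of the three lines of \eqref{1}. For the PDE: $\partial_t G + \partial_a G + \delta G = (\partial_t Q + \partial_a Q + \delta Q) + (\partial_t g + \partial_a g + \delta g)$. Using \eqref{sol_g}, $\partial_t g = \mu w'(t) D(a)$, $\partial_a g = \mu w(t) D'(a) = -\mu w(t)\delta(a)D(a)$, and $\delta g = \mu w(t)\delta(a)D(a)$, so the last two cancel and the $g$-contribution is exactly $\mu D(a) w'(t)$. Therefore the PDE in \eqref{1}, namely $\partial_t G + \partial_a G + \delta G = u^\rho$, is equivalent to $\partial_t Q + \partial_a Q + \delta Q = u^\rho - \mu D(a) w'(t)$, which is the PDE in \eqref{eqQ}. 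For the initial condition: $G(0,a) = Q(0,a) + g(0,a) = Q(0,a) + \mu w(0) D(a)$, so $G(0,\cdot) = G_0$ is equivalent to $Q(0,a) = G_0(a) - \mu w(0) D(a)$, the third line of \eqref{eqQ}.

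The one line that requires a genuine computation rather than mere cancellation is the boundary condition; this is the main (though still routine) obstacle, because it is where the definition \eqref{mu} of $\mu$ is forced. I would compute $G(t,0) = Q(t,0) + g(t,0) = Q(t,0) + \mu w(t)$, while the right-hand side of the boundary condition in \eqref{1} is
\begin{align*}
\int_0^1\!\big(R(a)G(t,a) + u^\rho(t,a)\big)\,da + u_0^\rho(t) = \int_0^1 R(a)\big(Q(t,a) + \mu w(t)D(a)\big)\,da + w(t),
\end{align*}
having used \eqref{w}. The terms involving $w(t)$ on the right total $w(t)\big(1 + \mu\int_0^1 R(a)D(a)\,da\big)$, and by \eqref{mu} one has $1 + \mu\int_0^1 R(a)D(a)\,da = \mu$ (equivalently $\mu\big(1 - \int_0^1 R(a)D(a)\,da\big) = 1$). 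Hence the boundary condition $G(t,0) = \int_0^1(R(a)G + u^\rho)\,da + u_0^\rho$ becomes $Q(t,0) + \mu w(t) = \int_0^1 R(a)Q(t,a)\,da + \mu w(t)$, i.e. $Q(t,0) = \int_0^1 R(a)Q(t,a)\,da$, the second line of \eqref{eqQ}. Since each of the three reductions is an equivalence and $G \leftrightarrow Q$ is a bijection on $C^1([0,T]\times[0,1])$ (note $g$ is $C^1$ so $G\in C^1 \iff Q\in C^1$), the theorem follows; I would close by remarking that assumption \eqref{expstability} is exactly what guarantees the denominator in \eqref{mu} is positive, so that $\mu$ is well defined.
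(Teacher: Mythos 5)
Your proof is correct and follows essentially the same route as the paper's: substitute $G = Q + g$ with $g(t,a)=\mu w(t)D(a)$ into each line of \eqref{1}, observe that the transport terms for $g$ cancel to leave exactly $\mu D(a)w'(t)$, and use the definition \eqref{mu} of $\mu$ to reduce the boundary condition to $Q(t,0)=\int_0^1 R(a)Q(t,a)\,da$. The only (minor, and arguably beneficial) difference is that you phrase each step as an equivalence and note explicitly that $G\mapsto G-g$ is a bijection of $C^1([0,T]\times[0,1])$, so both implications are handled at once, whereas the paper proves the ``only if'' direction and remarks that the converse is similar.
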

\proof
Let $G$ be a classical solution to \eqref{1}. Then, the assumptions on controls $u, u_0$ guarantee that there exists continuous derivative $w'$. By  \eqref{Q}-\eqref{g} and \eqref{1}  we obtain
\begin{align*}
\frac{\partial Q(t,a)}{\partial t}+\frac{\partial Q(t,a)}{\partial a}&=\frac{\partial G(t,a)}{\partial t}+\frac{\partial G(t,a)}{\partial a}-\frac{\partial g(t,a)}{\partial t}-\frac{\partial g(t,a)}{\partial a}\\=&-\delta(a)G(t,a)+u^\rho(t,a)-\mu D(a)w'(t)+\delta(a)g(t,a)\\ &=-\delta(a)Q(t,a)+u^\rho(t,a)-\mu D(a)w'(t)
\end{align*}
for all $(t,a)\in [0,T]\times[0,1]$.
Moreover, for all $a\in [0,1]$ from \eqref{sol_g} we have the initial condition  
\begin{equation*}
Q(0,a)=G(0,a)-g(0,a)=G_0(a)-\mu w(0)D(a).
\end{equation*}
By \eqref{mu}-\eqref{sol_g} and \eqref{2}  we have the boundary condition 
\begin{align*}
Q(t,0)&=G(t,0)-g(t,0)=\int_0^{1}R(a)G(t,a)da+w(t)-\mu w(t)\\&=\int_0^{1}R(a)Q(t,a)da+\int_0^{1}R(a)g(t,a)da-\mu w(t)+w(t)\\&=\int_0^{1}R(a)Q(t,a)da+w(t)\left(\mu\left(\int_0^{1}R(a)D(a)da-1\right)+1\right)\\&=\int_0^{1}R(a)Q(t,a)da
\end{align*}
for all $t\in[0,T]$. Similarly, one can prove the "if" implication. 
%\Halmos
\endproof

\subsection{The goodwill equation as a homogeneous Cauchy problem in a Hilbert space}
Since we want to consider non-smooth controls $u,u_0$, we need to introduce a weaker concept of solution to \eqref{1}.  Let $L^2(0,1)$ denote the Lebesgue space of square integrable functions on $(0,1)$ and for $p\geq 1$ $W^{1,p }(0,1)$ is the Sobolev space of $p$-integrable functions with weak derivative in $L^p(0,1)$. We rewrite \eqref{eqQ} as an evolution equation in $L^2(0,1)$.
Define a linear unbounded operator on $L^2(0,1)$ by 
\begin{align*}%\label{A}
\mathcal{D}(\mathcal{A})&=\left\{ \phi\in W^{1,2}(0,1)\colon \phi(0)=\int_0^{1}R(a)\phi(a)da\right\},\\
\mathcal{A}\phi&=-\phi'-\delta \phi,\quad \phi\in \mathcal{D}(\mathcal{A}).
\end{align*}
Then, under the assumption \eqref{expstability} the operator  $(\mathcal{A},\mathcal{D}(\mathcal{A}))$ generates strongly continuous semigroup of linear operators $(S(t))_{t\geq 0}$ on $L^2(0,1)$ (see \cite{Webb1985, daprato1994}). The semigroup $\left(S(t)\right)_{t\geq 0}$ is given by
\begin{align}\label{S}
(S(t)\phi)(a)=\left\{\begin{array}{ll}B_{\phi}(t-a)D(a) & a\in[0,t],\\
\phi(a-t)\frac{D(a)}{D(a-t)} & a>t,
\end{array}\right.
\end{align}
where $B_{\phi}$ be a solution of  Volterra integral equation
\begin{align}\label{B}
B_{\phi}(t)=F_{\phi}(t)+\int_0^tK(t-s)B_{\phi}(s)ds,\quad t\geq 0
\end{align}
with
$$K(t)=\left\{\begin{array}{ll}R(t)D(t)&\textrm{ if } t\in[0,1],\\0&\textrm{ if } t>1\end{array}\right.$$
and
$$F_{\phi}(t)=\int_{t\wedge 1}^{1}\phi(s-t)R(s)\frac{D(s)}{D(s-t)}ds .$$
\begin{remark}
By \eqref{expstability} the semigroup $(S(t))_{t\geq 0}$ is uniformly exponentially stable (cf. section 4.5 in Chapter VI in \cite{Engel2006}).
\end{remark}

\begin{prop} \label{prop1} Assume \eqref{expstability}. Then, for all $\phi \in L^2(0,1)$
\begin{enumerate}
\item the equation \eqref{B} possesses the unique continuous solution $B_{\phi}$ on $[0,\infty)$;
\item  the function $B_{\phi}$ satisfies  \begin{align} \label{estB}\|B_{\phi}\|_{L^{\infty}(0,1)}\leq\mu \|\phi\|_{L^{\infty}(0,1)}\|R\|_{L^{\infty}(0,1)}.
\end{align}
\item For $D$ defined in \eqref{D} the function $B_{D}$ satisfies 
\begin{equation*}
B_{D}(t)=\int_{t\wedge1}^{1}R(s)D(s)ds+\int_0^{t\wedge1}R(s)D(s)B_{D}(t-s)ds
\end{equation*}
and is differentiable on $[0,\infty)$.
Moreover, $B'_{D}\in L_{loc}^\infty(0,\infty)$ is the solution to
\begin{align}\label{dB}
B'_{D}(t)=\left\{\begin{array}{ll} -\frac{1}{\mu}R(t)D(t)+\int_0^t R(s)D(s)B'_{D}(t-s)ds & t\in[0,1),\\
\int_0^{1}R(s)D(s)B'_{D}(t-s)ds & t\geq 1
\end{array}\right.
\end{align}
and hence for all $t>0$ satisfies
\begin{align}\label{dB1}
\| B'_{D}\|_{L^\infty(0,t)}\leq \mu \|R\|_{L^\infty(0,1)}. 
\end{align}

%and the functions $B'_{D}|_{[0,1)}$ and $B'_{D}|_{(1,\infty)}$ are continuous .....?????? R(t)- cont???.
\end{enumerate}
\end{prop}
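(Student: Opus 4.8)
The plan is to treat all three parts through the Volterra (renewal) structure of \eqref{B}, whose kernel $K$ is sub-probabilistic: $K\in L^{1}(0,\infty)$, $\mathrm{supp}\,K\subset[0,1]$ and $\|K\|_{L^{1}}=\int_{0}^{1}R(a)D(a)\,da<1$ by \eqref{expstability}.

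\emph{Part (1).} First I would note that $F_{\phi}$ is bounded, since Cauchy--Schwarz gives $|F_{\phi}(t)|\le\|R\|_{L^{\infty}(0,1)}\|\phi\|_{L^{1}(0,1)}$, and continuous on $[0,\infty)$: after the substitution $\sigma=s-t$ one has $F_{\phi}(t)=\int_{0}^{(1-t)\vee 0}\phi(\sigma)R(\sigma+t)\frac{D(\sigma+t)}{D(\sigma)}\,d\sigma$, and continuity in $t$ follows from continuity of translations in $L^{1}$ together with the joint continuity and boundedness of $(\sigma,t)\mapsto D(\sigma+t)/D(\sigma)$. On each interval $[0,T]$ the map $B\mapsto F_{\phi}+K*B$ is then a contraction on $C[0,T]$ in the weighted norm $\sup_{[0,T]}e^{-\lambda t}|B(t)|$ for $\lambda$ large (equivalently, expand $B_{\phi}=F_{\phi}+\Lambda*F_{\phi}$ with resolvent kernel $\Lambda=\sum_{n\ge 1}K^{*n}$, the series converging since $\|K^{*n}\|_{L^{1}}\le\|K\|_{L^{1}}^{n}\to 0$); patching the interval solutions yields the unique continuous $B_{\phi}$ on $[0,\infty)$.

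\emph{Part (2).} If $\phi\notin L^{\infty}$, \eqref{estB} is vacuous, so assume $\phi\in L^{\infty}$. Since $\delta\ge 0$, $D(s)/D(s-t)=e^{-\int_{s-t}^{s}\delta(r)\,dr}\le 1$, whence $|F_{\phi}(t)|\le\|\phi\|_{L^{\infty}(0,1)}\|R\|_{L^{\infty}(0,1)}$ for every $t$ (and $F_{\phi}\equiv 0$ on $[1,\infty)$). By Part (1), $m:=\|B_{\phi}\|_{L^{\infty}(0,1)}<\infty$, and inserting this into \eqref{B} gives, for $t\in[0,1]$, $|B_{\phi}(t)|\le\|\phi\|_{L^{\infty}(0,1)}\|R\|_{L^{\infty}(0,1)}+m\int_{0}^{1}R(a)D(a)\,da$; taking the supremum over $t\in[0,1]$ and solving for $m$ (legitimate because $\int_{0}^{1}R(a)D(a)\,da<1$) yields exactly $m\le\mu\,\|\phi\|_{L^{\infty}(0,1)}\|R\|_{L^{\infty}(0,1)}$ by the definition \eqref{mu} of $\mu$.

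\emph{Part (3).} For $\phi=D$ the exponential factors cancel, $F_{D}(t)=\int_{t\wedge 1}^{1}R(s)D(s)\,ds$, and substituting $\sigma=t-s$ in the convolution turns \eqref{B} into the displayed renewal equation for $B_{D}$. Here $F_{D}$ is Lipschitz on $[0,\infty)$ (constant $\le\|R\|_{L^{\infty}(0,1)}$) and eventually zero; combined with $B_{D}=F_{D}+\Lambda*F_{D}$, where $\Lambda=K+K*\Lambda\in L^{1}(0,\infty)\cap L^{\infty}_{\mathrm{loc}}(0,\infty)$, an elementary estimate of difference quotients shows $B_{D}$ is locally Lipschitz, hence a.e.\ differentiable with $B_{D}'\in L^{\infty}_{\mathrm{loc}}(0,\infty)$. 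To identify $B_{D}'$, differentiate $B_{D}(t)=F_{D}(t)+\int_{0}^{t}K(s)B_{D}(t-s)\,ds$ at a.e.\ $t$: the increment of the convolution splits as $\int_{0}^{t}K(s)\big(B_{D}(t+h-s)-B_{D}(t-s)\big)\,ds+\int_{t}^{t+h}K(s)B_{D}(t+h-s)\,ds$; divided by $h$, the first term tends to $\int_{0}^{t}K(s)B_{D}'(t-s)\,ds$ by dominated convergence (difference quotients of the Lipschitz $B_{D}$ are bounded and converge a.e.) and the second to $K(t)B_{D}(0)$ at every Lebesgue point of $K$. Since $F_{D}'(t)=-K(t)$ a.e.\ and $B_{D}(0)=F_{D}(0)=\int_{0}^{1}R(s)D(s)\,ds=1-\tfrac1\mu$, this produces $B_{D}'(t)=-\tfrac1\mu K(t)+\int_{0}^{t}K(s)B_{D}'(t-s)\,ds$, i.e.\ \eqref{dB}; uniqueness of its $L^{\infty}_{\mathrm{loc}}$ solution is again the Volterra argument of Part (1). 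Finally, feeding $|B_{D}'|$ into \eqref{dB} and arguing as in Part (2) (now with forcing bounded by $\tfrac1\mu\|R\|_{L^{\infty}(0,1)}$, and using $\mathrm{supp}\,K\subset[0,1]$ to propagate past $t=1$) gives \eqref{dB1}, in fact the sharper bound $\|B_{D}'\|_{L^{\infty}(0,t)}\le\|R\|_{L^{\infty}(0,1)}$.

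I expect the contraction / Neumann-series step and the Gronwall-type a priori estimates to be entirely routine. The only genuinely delicate point is the differentiation of the convolution in Part (3): one must first upgrade the continuity of $B_{D}$ to local Lipschitz regularity so the difference quotients are dominated, and then handle the boundary term $\tfrac1h\int_{t}^{t+h}K(s)B_{D}(t+h-s)\,ds\to K(t)B_{D}(0)$, which is where the almost-everywhere (Lebesgue-point) nature of \eqref{dB} enters — and why $B_{D}'$ is obtained in $L^{\infty}_{\mathrm{loc}}$ rather than as a continuous function unless $R$, hence $K$, is continuous.
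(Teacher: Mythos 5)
Your argument is correct, but it is considerably more self-contained than the paper's. The paper disposes of Part 1 in one line by citing Theorem 3.5 of Chapter 2 of \cite{gripenberg1990} (after noting $F_\phi\in C([0,\infty))$), and of Part 3 by citing Theorem 3.3 of Chapter 3 there (after noting that $F_D$ is differentiable a.e.\ with $F_D'\in L^\infty$); you instead reprove both facts from scratch --- the resolvent series $\Lambda=\sum_{n\ge1}K^{*n}$ (equivalently the weighted-norm contraction) for existence, uniqueness and continuity, and the Lipschitz-regularity-plus-difference-quotient computation for Part 3, including the boundary term $K(t)B_D(0)=K(t)\left(1-\tfrac{1}{\mu}\right)$ which combines with $F_D'=-K$ to produce the coefficient $-\tfrac{1}{\mu}$ in \eqref{dB}. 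Part 2 is the same a priori estimate as in the paper; you are slightly more careful in first securing $\|B_\phi\|_{L^\infty(0,1)}<\infty$ from continuity before solving the inequality for it, and you observe correctly that \eqref{dB1} in fact holds in the sharper form $\|B_D'\|_{L^\infty(0,t)}\le\|R\|_{L^\infty(0,1)}$ (the paper's constant $\mu\|R\|_{L^\infty(0,1)}$ is larger since $\mu>1$). The only loose phrase is deducing continuity of $F_\phi$ ``from continuity of translations in $L^1$'': the translation should be applied to $\phi$ (extended by zero) against the bounded kernel $R(s)D(s)/D(s-t)$, or one can use $L^2$-translation continuity together with Cauchy--Schwarz since $\phi\in L^2(0,1)$ and $R\in L^\infty(0,1)$; either repair is immediate. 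What your route buys is independence from the cited monograph and an explicit account of where the almost-everywhere (Lebesgue-point) qualification in \eqref{dB} enters; what it costs is length.
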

\begin{proof}
Let $\phi\in L^2(0,1)$.
\begin{enumerate}
\item[Part 1.] Notice that $F_\phi \in C([0, \infty))$, thus the result follows from Theorem 3.5 in Chapter 2 in  \cite{gripenberg1990}.

\item[Part 2.] Since $\frac{D(s)}{D(s-t)}=e^{-\int_{s-t}^s\delta(u)du}<1$ for all $s\geq t\geq 0$, from \eqref{B} we obtain
\begin{align*}
\|B_{\phi}(\cdot)\|_{L^{\infty}(0,1)}\leq\|\phi\|_{L^{\infty}(0,1)}\|R\|_{L^{\infty}(0,1)}+\int_0^{1}R(s)D(s)ds \|B_{\phi}\|_{L^{\infty}(0,1)}.
\end{align*}
Hence and using the definition of $\mu$ in \eqref{mu} we get
\begin{align*}
\|B_{\phi}(\cdot)\|_{L^{\infty}(0,1)}\leq \mu\|\phi\|_{L^{\infty}(0,1)}\|R\|_{L^{\infty}(0,1)}.
\end{align*}

\item[Part 3.]  Since $F_D(t)=\int_{t\wedge 1}^{1}R(s)D(s)ds, t\geq 0$ is differentiable a.e on $[0,\infty)$ and $F'\in L^\infty(0,\infty)$, by Theorem 3.3 from Chapter 3 in \cite{gripenberg1990} the  $B_D$ is differentiable a.s on $[0,\infty)$  the derivative $B'_D\in L_{loc}^\infty(0,\infty)$ satisfies \eqref{dB}. \eqref{dB1} is a simple consequence of \eqref{dB}.
\end{enumerate}
\end{proof}

Notice that the equation \eqref{eqQ} can be reformulated as a Cauchy problem  in the Hilbert space $L^2(0,1)$:
\begin{align}\label{eqQ2}
\left\{\begin{array}{ll}
Q'(t)=\mathcal{A}Q(t)+u^\rho(t)-\mu D w'(t)\quad t\in[0,T],\\
Q(0)=G_0-\mu w(0)D.&\end{array}
\right.
\end{align}
Based on \cite{Pazy1983} we introduced the following definition
\begin{definition}\label{df1} A measurable function $Q\colon [0,T]\rightarrow L^2(0,1)$ is called a mild solution to \eqref{eqQ2} if $G_0\in L^{2}(0,1)$ and $w\in W^{1,1}([0,T])$ and for any $t\in[0,T]$ one has 
\begin{align}\label{mildQ}
Q(t)=S(t)Q(0)+\int_0^tS(t-s) u^\rho(s)ds-\mu\int_0^tS(t-s)D w'(s)ds.
\end{align}
\end{definition}
\begin{remark}\label{r:2}
Let the assumptions of Theorem \ref{thm1} be satisfied. If $Q$ is a classical solution to \eqref{eqQ}, then $Q$ is a mild solution to \eqref{eqQ2} (cf. \cite{Engel2000} and \cite{Pazy1983}).
\end{remark}

\begin{definition}
\label{df3} A measurable function $G\colon [0,T]\rightarrow L^2(0,1)$ is called a generalised mild solution to \eqref{1} if $G_0\in L^{2}(0,1)$, $(u_0,u)\in L^2(0,T)\times L^2((0,T)\times (0,1))$ and for any $t\in[0,T]$ one has 
$$G(t)=S(t)G_0+\int_0^tS(t-s)u^\rho(s)ds-\mu\mathcal{A}\int_0^tS(t-s)D w(s)ds.$$
We write $G=G(u,u_0;G_0)$ to denote that generalised mild solution to \eqref{1} depends on the controls $u$, $u_0$ and the initial value $G_0$. 
\end{definition}
\begin{remark} \label{r:3}
If $Q$ is a mild solution to \eqref{eqQ2}, then $G=Q+g$ is a generalised mild solution to \eqref{1}.
\end{remark}
\begin{proof}[Proof of Remark \ref{r:3}]
Indeed, from \eqref{mildQ},   \eqref{sol_g} and \eqref{Q} we get 
\begin{align*}
G(t)=S(t)G_0-\mu w(0)S(t)D +\mu w(t)D+\int_0^t S(t-s) u^\rho(s)ds-\mu\int_0^t  w'(s) S(t-s)Dds.
\end{align*}
We integrate by parts the last term to obtain 
\begin{align*}
G(t)=&S(t)G_0-\mu w(0)S(t)D +\mu w(t)  D +\int_0^tS(t-s) u^\rho(s)ds-\mu\left(\left[w(s)S(t-s)D \right]_0^t+\mathcal{A}\int_0^t w(s)S(t-s)D ds\right)\\
&=S(t)G_0+\int_0^tS(t-s) u^\rho(s)ds-\mu\mathcal{A}\int_0^t w(s)S(t-s)D ds.
\end{align*}
\end{proof}
From Remarks \ref{r:2}, \ref{r:3} and Theorem \ref{thm1} we obtain:
\begin{remark}
Under the assumptions of Theorem \ref{thm1} if $G$ is a classical solution to \eqref{1}, then $G$ is a generalised mild solution to \eqref{1}.
\end{remark}

Here and subsequently, $z$ stands for \begin{align} 
\label{z} z(t)=\int_0^t w(s)S(t-s)D ds,
\end{align}
where $w$ is given by \eqref{w}.
\begin{theorem}\label{thm2}
Let $G_0\in L^2(0,1)$, $(u_0,u)\in L^2(0,T)\times L^2((0,T)\times(0,1))$ and \eqref{expstability} holds. Then,
\begin{enumerate}
\item there exists a unique generalised mild solution $G(u,u_0;G_0)$ to \eqref{1} i.e. $z(t)\in \mathcal{D}(\mathcal{A})$ for all $t\in [0,T]$,
\item $\mathcal{A}z\in C(0,T;L^2(0,1))\cap L^{\infty}(0,T; L^{\infty}(0,1))$.
\end{enumerate}
Furthermore, if $(u_0^1,u^1)\in L^2(0,T)\times L^2((0,T)\times(0,1))$ and $G=G(u,u_0;G_0)$ and $G_1=G_1(u^1,u_0^1;G_0)$ are the generalised mild solution to \eqref{1}, then there exist $L_1,L_2>0$ such that
\begin{enumerate}
\item[3.]  \begin{align*}
\sup_{t\in[0,T]} \|G(t)\|_{L^2(0,1)}&\leq L_1\left(\|G_0\|_{L^2(0,1)}+\|u\|^\rho_{L^2((0,1)\times(0,T))}+\|u_0\|^\rho_{L^2(0,T)}\right),\\
\sup_{t\in[0,T]} \|G(t)-G_1(t)\|_{L^2(0,1)}&\leq L_1\left(\|u-u^1\|^\rho_{L^2((0,1)\times(0,T))}+\|u_0-u^1_0\|^\rho_{L^2(0,T)}\right),
\end{align*}

\item[4.] if moreover  $(u_0,u), (u^1_0,u^1)\in  U_{0,ad} \times U_{ad} $ and $G_0\in L^{\infty}(0,1)$, then we have
\begin{align*}
\sup_{t\in[0,T]} \|G(t)\|_{L^{\infty}(0,1)}&\leq L_2\left(\|G_0\|_{L^\infty(0,1)}+ \|u\|^\rho_{L^{\infty}((0,1)\times(0,T))}+\|u_0\|^\rho_{L^{\infty}(0,T)}\right),\\
\sup_{t\in[0,T]} \|G(t)-G_1(t)\|_{L^\infty(0,1)}&\leq L_2\left(\|u-u^1\|^\rho_{L^\infty((0,1)\times(0,T))}+\|u_0-u^1_0\|^\rho_{L^\infty(0,T)}\right).
\end{align*}
\end{enumerate}
\end{theorem}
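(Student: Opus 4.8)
The whole of (1)--(2) reduces to understanding the single term $z(t)=\int_0^tw(s)S(t-s)D\,ds$: once $z(t)\in\mathcal{D}(\mathcal{A})$ is known for every $t\in[0,T]$, Definition~\ref{df3} is meaningful, and since its right-hand side is an explicit expression in $G_0$, $u^\rho$ and $w$, existence and uniqueness of the generalised mild solution are immediate. So the plan is to compute $z(t)$ and $\mathcal{A}z(t)$ in closed form from the semigroup formula \eqref{S}. For $\phi=D$ the second branch of \eqref{S} collapses, since $D(a-\tau)\,\tfrac{D(a)}{D(a-\tau)}=D(a)$, so $(S(\tau)D)(a)=B_D(\tau-a)D(a)$ for $a\le\tau$ and $(S(\tau)D)(a)=D(a)$ for $a>\tau$; putting $\tau=t-s$ and splitting the $s$-integral at $s=t-a$ gives, for a.e.\ $a\in[0,1]$ with $a<t$,
\[
z(t)(a)=D(a)\Big(\int_0^{t-a}w(s)B_D(t-a-s)\,ds+\int_{t-a}^{t}w(s)\,ds\Big),
\]
and $z(t)(a)=D(a)\int_0^tw(s)\,ds$ for $a\ge t$. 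All integrals converge since $w\in L^{2/\rho}(0,T)\subset L^2(0,T)$: by Jensen $\int_0^1u^\rho(s,a)\,da\le\|u(s,\cdot)\|_{L^2(0,1)}^\rho$, so $\|w\|_{L^{2/\rho}(0,T)}^{2/\rho}\lesssim\|u\|_{L^2((0,1)\times(0,T))}^2+\|u_0\|_{L^2(0,T)}^2$.

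Next I would verify $z(t)\in W^{1,2}(0,1)$ by differentiating this closed form, using that $D$ is absolutely continuous with $D'=-\delta D\in L^\infty$ and that, by Proposition~\ref{prop1}(3), $B_D$ is differentiable with $\|B'_D\|_{L^\infty(0,T)}\le\mu\|R\|_{L^\infty(0,1)}$ and $B_D(0)=\int_0^1R(a)D(a)\,da=1-\tfrac1\mu$. Leibniz' rule gives, for $a<t$,
\[
(z(t))'(a)=-\delta(a)z(t)(a)+D(a)\Big(\tfrac1\mu w(t-a)-\int_0^{t-a}w(s)B'_D(t-a-s)\,ds\Big),
\]
and $(z(t))'(a)=-\delta(a)z(t)(a)$ for $a\ge t$; these lie in $L^2(0,1)$ because $w\in L^2(0,T)$ and the convolution with $B'_D$ is bounded. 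Hence in $\mathcal{A}z(t)=-(z(t))'-\delta z(t)$ the $\delta z$ terms cancel, leaving
\[
\mathcal{A}z(t)(a)=D(a)\Big(\int_0^{t-a}w(s)B'_D(t-a-s)\,ds-\tfrac1\mu w(t-a)\Big)\ \ (a<t),\qquad \mathcal{A}z(t)(a)=0\ \ (a\ge t).
\]

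The step I expect to be the main obstacle is the nonlocal boundary identity $z(t)(0)=\int_0^1R(a)z(t)(a)\,da$ needed for $z(t)\in\mathcal{D}(\mathcal{A})$: one inserts the closed form on the right, applies Fubini, and uses \eqref{B} in the form of Proposition~\ref{prop1}(3), so that the double integral of $R(a)D(a)$ against $B_D$ reduces, via $\int_0^\tau R(b)D(b)B_D(\tau-b)\,db=B_D(\tau)-F_D(\tau)$, to $\int_0^tw(s)\bigl(B_D(t-s)-F_D(t-s)\bigr)\,ds$, while the two ``flat'' pieces reassemble $\int_0^tw(s)F_D(t-s)\,ds$; adding them cancels the $F_D$'s and leaves exactly $\int_0^tw(s)B_D(t-s)\,ds$, which is $z(t)(0)$ by the closed form at $a=0$. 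This is carried out separately for $t<1$ and $t\ge1$ (only the ranges of $a$ relative to $t$ differ). This proves (1). For (2), the formula for $\mathcal{A}z(t)$ gives $\|\mathcal{A}z(t)\|_{L^\infty(0,1)}\le\mu\|R\|_{L^\infty(0,1)}\|w\|_{L^1(0,T)}+\tfrac1\mu\|w\|_{L^\infty(0,T)}$ uniformly in $t$ (here $\|D\|_\infty\le1$, and the second term is where boundedness of $w$, i.e.\ of $u$ and $u_0$, is used), and $t\mapsto\mathcal{A}z(t)$ is continuous into $L^2(0,1)$ by continuity of translation in $L^2$ applied to $a\mapsto D(a)w(t-a)\mathbf{1}_{\{a<t\}}$ together with dominated convergence for the convolution term.

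For (3)--(4) I would feed the representation $G(t)=S(t)G_0+\int_0^tS(t-s)u^\rho(s)\,ds-\mu\mathcal{A}z(t)$ into three standard bounds: $M:=\sup_{t\ge0}\|S(t)\|_{\mathcal{L}(L^2(0,1))}<\infty$ (the semigroup is exponentially stable), and for (4) also $\|S(t)\phi\|_{L^\infty(0,1)}\le\max(\mu\|R\|_{L^\infty(0,1)},1)\|\phi\|_{L^\infty(0,1)}$, both read off from \eqref{S} and Proposition~\ref{prop1}(2); the elementary inequality $|x^\rho-y^\rho|\le|x-y|^\rho$ for $x,y\ge0$, $\rho\in(0,1]$; and the bounds on $\|\mathcal{A}z(t)\|$ above with $w(s)\le\|u(s,\cdot)\|^\rho+u_0^\rho(s)$ in the relevant norm. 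Concretely $\|u^\rho(s)\|_{L^2(0,1)}\le\|u(s,\cdot)\|_{L^2(0,1)}^\rho$ by Jensen and $\int_0^T\|u(s,\cdot)\|_{L^2}^\rho\,ds\le T^{1-\rho/2}\|u\|_{L^2((0,1)\times(0,T))}^\rho$ by Hölder, so the integral term is $\lesssim\|u\|_{L^2}^\rho$ and the $\mathcal{A}z$ term is $\lesssim_T\|u\|_{L^2}^\rho+\|u_0\|_{L^2}^\rho$; absorbing all constants into one $L_1=L_1(T,M,\mu,\|R\|_{L^\infty(0,1)})$ gives the first inequality of (3), and the second follows by the same computation applied to $G-G_1=\int_0^tS(t-s)\bigl(u^\rho-(u^1)^\rho\bigr)(s)\,ds-\mu\mathcal{A}(z-z^1)(t)$ --- which is linear in $\bigl(u^\rho-(u^1)^\rho,\,w-w^1\bigr)$ --- together with $\|u^\rho(s)-(u^1)^\rho(s)\|_{L^2(0,1)}\le\|u(s,\cdot)-u^1(s,\cdot)\|_{L^2(0,1)}^\rho$ and its analogue for $u_0$. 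Part (4) is the same argument verbatim with $L^2$ replaced by $L^\infty$ and $\|w\|_{L^\infty(0,T)}\le\|u\|_{L^\infty}^\rho+\|u_0\|_{L^\infty}^\rho$, yielding $L_2$. Apart from the boundary identity of the third paragraph everything is routine; the one caveat to keep in mind is that under the hypotheses of (1)--(3) the function $w$ lies only in $L^{2/\rho}(0,T)$, which is exactly what is needed for $z(t)\in W^{1,2}(0,1)$ and $\mathcal{A}z(t)\in L^2(0,1)$.
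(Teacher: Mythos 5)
Your proposal is correct, and for parts (2)--(4) it runs essentially parallel to the paper: the same closed-form expression for $z(t)$ read off from \eqref{S}, differentiation to get an explicit formula for $\mathcal{A}z(t)$, the bound $\|B'_D\|_{L^\infty}\le\mu\|R\|_{L^\infty(0,1)}$ from Proposition \ref{prop1}(3), and then the elementary inequalities $|x^\rho-y^\rho|\le|x-y|^\rho$, Jensen and H\"older to convert everything into the stated $L^2$ and $L^\infty$ estimates. Where you genuinely diverge is part (1) and the Lipschitz estimate in $L^2$: the paper disposes of both by citing Theorem 2.2 of da Prato--Iannelli, which delivers $z(t)\in\mathcal{D}(\mathcal{A})$, $\mathcal{A}z\in C([0,T];L^2(0,1))$ and $\|\mathcal{A}(z-z^1)(t)\|_{L^2(0,1)}\le L\|w-w^1\|_{L^2(0,T)}$ in one stroke, whereas you verify membership in $\mathcal{D}(\mathcal{A})$ by hand --- in particular the nonlocal boundary identity $z(t)(0)=\int_0^1R(a)z(t)(a)\,da$ via Fubini and the renewal equation for $B_D$, which is indeed the only nontrivial point and which you carry out correctly (the $F_D$ terms cancel exactly as you say). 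Your route is longer but self-contained and makes the regularity hypotheses transparent (your closing remark that $w\in L^{2/\rho}(0,T)\subset L^2(0,T)$ is exactly the right accounting); the paper's route is shorter but leans on an external theorem. One inessential discrepancy: your formula for $\mathcal{A}z(t)(a)$ carries the opposite sign on the convolution term relative to the paper's \eqref{Az}; since $B_D(0)=1-\tfrac1\mu$, your version appears to be the correct one, and in any case only the absolute bounds enter the estimates, so nothing downstream is affected.
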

\begin{proof}
Let  $z^1(t)=\int_0^tS(t-s)D w^1(s)ds$ where $w^1(t)=\int_0^1(u^1(t,a))^\rho da+(u_0^1(t))^\rho$ for all $t\geq 0$. By \cite[Theorem 2.2]{daprato1994} we obtain that $z(t), z^1(t)\in \mathcal{D}(\mathcal{A})$  for all $t\in[0,T]$, and  $\mathcal{A}z(\cdot),\mathcal{A}z^1(\cdot)\in C([0,T];L^2(0,1))$ and \begin{align}\label{23}\|\mathcal{A}(z(t)-z^1(t))\|_{L^2(0,1)}\leq L\|w-w^1\|_{L^2(0,T)}\end{align}
for some $L>0$.
Moreover, by H\"older's continuity of $x\mapsto x^\rho$  and the H\"older inequalities we have 
\begin{align}\label{22}
\|w-w^1\|_{L^2(0,T)}&\leq \left\|t\mapsto\int_0^{1}(u^\rho(t,a)-(u^1(t,a))^\rho)da\right\|_{L^2(0,T)}+\left\|u_0^\rho-(u_0^1)^\rho\right\|_{L^2(0,T)}\\&\leq
\left\|t\mapsto\int_0^{1}|u(t,a)-u^1(t,a)|^\rho da\right\|_{L^2(0,T)}+\left\|(u_0-u_0^1)^\rho\right\|_{L^2(0,T)}\notag\\
&\leq T^{\frac{1-\rho}{2}}\left(\|u-u^1\|^\rho_{L^2\left((0,1)\times(0,T)\right)}+\|u_0-u_0^1\|^\rho_{L^2(0,T)}\right).\notag
\end{align}
Similarly, we have

\begin{align}
\label{21} \left\|\int_0^tS(t-s) (u^\rho(s)-(u^1(s))^\rho)ds\right\|_{L^2(0,1)}&\leq M(T)\int_0^t\left\| u^\rho(s)-(u^1(s))^\rho\right\|_{L^2(0,1)}ds
\\&\leq M(T)\int_0^t\left\| (u(s)-u^1(s))^\rho\right\|_{L^2(0,1)}ds\notag\\&\leq T^{1-\frac{\rho}{2}}M(T)\|u-u^1\|^\rho_{L^2\left((0,1)\times(0,T)\right)},\notag
\end{align}
where $M(T)=\sup_{t\in[0,T]}\|S(t)\|_{\mathcal{L}(L^2(0,1))}<\infty$.
Therefore, by \eqref{23}-\eqref{21} we obtain inequalities from part 3. of Theorem \ref{thm2}, where
%\begin{align*}
%\|G(t)
%\|G(t)-G_1(t)\|_{L^2(0,1)}&=\left\|%\int_0^tS(t-s)( %u^\rho(s)-(u^1(s))^\rho)ds-\mu\mathcal{A}%(z(t)-z^1(t))\right\|_{L^2(0,1)}\\&\leq
%L_1\left(\|u-u^1\|%^\rho_{L^2\left((0,1)\times(0,T)\right)}+\|%u_0-u_0^1\|^\rho_{L^2(0,T)}\right),
%\end{align*}
 $L_1=\max\left\{M(T),T^{1-\frac{\rho}{2}}M(T),\mu L T^{\frac{1-\rho}{2}}\right\}$.

Now we prove part 2. and the inequalities from part 4. of Theorem \ref{thm2}. 
By the definition of semigroup $(S(t))_{t\geq 0}$ (cf. \eqref{S})  we can rewrite \eqref{z} as
\begin{align}\label{z2}
z(t,a)=D(a)\left(\int_0^{t\wedge a}B_{D}(s-a)w(t-s)ds+\int_{t\wedge a}^tw(t-s)ds\right)
\end{align}
for all $a\in [0,1]$ and $t\in[0,T]$.
Differentiating \eqref{z2} with respect to $a$ we obtain
\begin{align}\label{Az}
\mathcal{A}z(t)(a)=\left\{\begin{array}{ll}
-D(a)\left[\frac{1}{\mu}w(t-a)+\int_a^{t}B'_{D}(s-a)w(t-s)ds\right] & t\geq a,\\
0 & t<a
\end{array}\right.
\end{align}
for all $t\in[0,T]$. 
Hence
\begin{align}\label{24}
\|\mathcal{A}z(t)\|_{L^{\infty}(0,1)}&\leq\left(\frac{1}{\mu}+t\left\|B'_D\right\|_{L^\infty(0,T)}\right)\|w\|_{L^{\infty}(0,T)}\\&\leq \left(\frac{1}{\mu}+t\mu \|R\|_{L^\infty(0,1)}\right)  \left(\|u\|^\rho_{L^\infty\left((0,1)\times(0,T)\right)}+\|u_0\|^\rho_{L^\infty(0,T)}\right), \notag
\end{align}
where the last inequality follows form the third part of Proposition \ref{prop1}. Since $0<D(a)\leq 1$ for every $a\in[0, 1]$, by \eqref{estB} we obtain 
\begin{align}\label{27}
\|S(t)\phi\|_{L^{\infty}(0,1)}\leq \left(\mu \|R\|_{L^{\infty}(0,1)}\vee 1\right)\|\phi\|_{L^\infty(0,1)}
\end{align}
for all $\phi\in L^{\infty}(0,1)$.
Similarly, by \eqref{estB} we have 
\begin{align}\label{26}
\left\|\int_0^tS(s) u^\rho(t-s)ds\right\|_{L^\infty(0,1)}\leq t
(\mu \|R\|_{L^{\infty}(0,1)}\vee 1)\|u\|^\rho_{L^{\infty}((0,T)\times(0,1))}.
\end{align}
Finally,  form \eqref{24}-\eqref{26} we get
\begin{align*}
&\|G(t)\|_{L^{\infty}(0,1)}\leq \|S(t)G_0\|_{L^{\infty}(0,1)}+ \left\|\int_0^tS(t-s) u^\rho(s)ds\right\|_{L^{\infty}(0,1)}+ \|\mu \mathcal{A}z(t)\|_{L^{\infty}(0,1)}\\ &\leq L_2\left(\|G_0\|_{L^{\infty}(0,t)}+\|u\|^\rho_{L^\infty\left((0,1)\times(0,T)\right)}+\|u_0\|^\rho_{L^\infty(0,T)}\right)
\end{align*}
for all  $G_0\in L^{\infty}(0,1)$ and $t\in[0,T]$, where $$L_2=\max\left\{1+t\mu^2 \|R\|_{L^\infty(0,1)},\left(\mu \|R\|_{L^{\infty}(0,1)}\vee 1\right),t\left(\mu \|R\|_{L^{\infty}(0,1)}\vee 1\right)\right\}.$$
\end{proof}

\subsection{The relation between the generalised mild solution and the solution along the characteristic lines}
\begin{theorem}\label{thm3}
If \eqref{expstability} holds, then the generalised mild solution to \eqref{1} satisfies the following formulae:
\begin{align} \label{GFeihtinger1}
G(t,t+c)=G_0(t)-\int_0^t \delta(s+c)G(s,s+c)ds+\int_0^t u^\rho(s,s+c)ds,
\end{align}
 for all $c\in(0,1]$ and all $t\in[0,1-c]$, and
\begin{align} \label{GFeihtinger2}
G(t,t+c)=G(-c,0)-\int_{-c}^t \delta(s+c)G(s,s+c)ds+\int_{-c}^tu^\rho(s,s+c)ds,
\end{align}
 for all $c\in[-T,0]$ and all $t\in(-c,T]$, and
\begin{align} \label{GFeihtinger3}
G(t,0)=\int_{0}^1 R(a)G(t,a)da+\int_{0}^1u^\rho(t,a)da+u^\rho_0(t),
\end{align}
 for all $t\in[0,T]$.
\end{theorem}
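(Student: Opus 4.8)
The plan is to work from the explicit representations already available: substituting the semigroup formula \eqref{S} (together with the Volterra equation \eqref{B}) and the formula \eqref{Az} for $\mathcal{A}z(t)$ into Definition \ref{df3} gives a fully explicit expression for $G(t,a)$, piecewise according to whether $a\le t$ or $a>t$. All three identities will be read off from this expression. For the regularity needed to make sense of the integrals along characteristics we fix a representative of $u\in U_{ad}$ so that its restriction to each line $a=t+c$ is well defined and bounded; note also $D(a)\ge e^{-1}>0$ since $\delta$ takes values in $[0,1]$.

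\emph{Characteristic lines, \eqref{GFeihtinger1}--\eqref{GFeihtinger2}.} Fix $c$ and restrict $G$ to $\{(t,t+c)\}$. If $c\in(0,1]$ and $t\in[0,1-c]$, then $a=t+c>t$ (and $t+c>t-s$), so only the branch $a>t$ of \eqref{S} occurs and $\mathcal{A}z(t)(t+c)=0$ by \eqref{Az}, giving
\[
G(t,t+c)=D(t+c)\Big(\tfrac{G_0(c)}{D(c)}+\int_0^t\tfrac{u^\rho(s,s+c)}{D(s+c)}\,ds\Big).
\]
If $c\in[-T,0]$ and $t\in(-c,T]$ (with $t+c\le1$), then $a=t+c\le t$, so the branches carrying $B_{G_0}$ and, for $s\le-c$, $B_{u^\rho(s)}$ appear and $\mathcal{A}z(t)(t+c)\neq0$; after the substitution $\sigma=t-s$ in the $B'_D$-convolution in \eqref{Az} one sees that every term not of the form $\int_{-c}^t$ is independent of $t$ and that these constants reassemble into the value of the explicit representation at the entry point $(-c,0)$, i.e. into $G(-c,0)$, whence
\[
G(t,t+c)=D(t+c)\Big(\tfrac{G(-c,0)}{D(0)}+\int_{-c}^t\tfrac{u^\rho(s,s+c)}{D(s+c)}\,ds\Big).
\]
In either case write $h(t):=G(t,t+c)=D(t+c)P(t)$. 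Since $\delta$ is bounded, $t\mapsto\log D(t+c)=-\int_0^{t+c}\delta$ is Lipschitz, so $D(\cdot+c)$ is absolutely continuous with $\frac{d}{dt}D(t+c)=-\delta(t+c)D(t+c)$ a.e., while $P$ is absolutely continuous with $P'(t)=u^\rho(t,t+c)/D(t+c)$ a.e. The product rule gives $h'(t)=-\delta(t+c)h(t)+u^\rho(t,t+c)$ a.e.; integrating from the entry time and using $h(0)=G_0(c)$, resp. $h(-c)=G(-c,0)$, yields \eqref{GFeihtinger1}--\eqref{GFeihtinger2}.

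\emph{The boundary identity \eqref{GFeihtinger3}.} The crux is the renewal identity: for every $\phi\in L^\infty(0,1)$ and $\tau\ge0$, inserting \eqref{S} into $\int_0^1 R(a)(S(\tau)\phi)(a)\,da$ and splitting at $a=\tau\wedge1$ reproduces exactly the right-hand side of \eqref{B} — the part over $a>\tau$ equals $F_\phi(\tau)$, the part over $a\le\tau$ equals $\int_0^\tau K(\tau-a)B_\phi(a)\,da$ since $K$ is supported in $[0,1]$ — so $\int_0^1 R(a)(S(\tau)\phi)(a)\,da=B_\phi(\tau)=(S(\tau)\phi)(0)$. Applying this with $\phi=G_0$ and, under the $s$-integral, with $\phi=u^\rho(s)$ (interchanging the $a$- and $s$-integrals, legitimate by the $L^\infty$ bounds of Proposition \ref{prop1}), and computing $\int_0^1 R(a)\mathcal{A}z(t)(a)\,da$ by the same splitting together with the convolution identity \eqref{dB} for $B'_D$, one finds that $\int_0^1 R(a)G(t,a)\,da$ and the value $G(t,0)$ obtained by setting $a=0$ in the explicit representation differ by exactly $w(t)$; since $w(t)=\int_0^1 u^\rho(t,a)\,da+u_0^\rho(t)$ by \eqref{w}, this is \eqref{GFeihtinger3}.

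\emph{Main obstacle.} The delicate step is the $c\le0$ case of the characteristic formula, where one must check that the $B_{G_0}$-, $B_{u^\rho}$- and $B'_D$-terms, after the change of variables $\sigma=t-s$, are $t$-independent and sum to $G(-c,0)$; this is essentially the same bookkeeping as the proof of \eqref{GFeihtinger3} evaluated at the boundary point $(-c,0)$. It is therefore cleanest to establish \eqref{GFeihtinger3} first and then feed it into the derivation of \eqref{GFeihtinger2}. The remaining computations (the piecewise evaluation of the explicit representation, the two changes of variables, and the Fubini/Volterra manipulations) are routine once \eqref{S}, \eqref{B}, \eqref{Az} and \eqref{dB} are in hand.
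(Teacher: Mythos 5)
Your proposal is correct and follows essentially the same route as the paper: both substitute the explicit semigroup formula \eqref{S}, the Volterra representation \eqref{B} and the expression \eqref{Az} for $\mathcal{A}z$ into Definition \ref{df3} to obtain an explicit representation of $G$ along each characteristic line, then use absolute continuity to recover the ODE $\frac{d}{dt}G(t,t+c)=-\delta(t+c)G(t,t+c)+u^{\rho}(t,t+c)$ and integrate from the entry point, with \eqref{GFeihtinger3} read off from the same explicit formulas via the renewal identity. Two small remarks: the ``delicate'' identification of the constant term with $G(-c,0)$ in the $c\le 0$ case is immediate, since the explicit representation is valid at $t=-c$ (where the $\int_0^{t+c}$-term vanishes), so no prior appeal to \eqref{GFeihtinger3} is needed; and your computation produces $G_0(c)$ as the entry value in \eqref{GFeihtinger1}, which is evidently the intended reading of the $G_0(t)$ printed in the statement.
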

\begin{proof} Identity \eqref{GFeihtinger3} follows directly from the definition of generalised mild solution of \eqref{1} and \eqref{S}, \eqref{Az}.  
We prove the second equality \eqref{GFeihtinger2}, the formula \eqref{GFeihtinger1} can be proven similarly.

Observe that by \eqref{S}   we obtain \begin{align}
\label{32}
\left(S(t)G_0\right)(t+c)=B_{G_0}(-c)D(t+c)
\end{align}
and
\begin{align}\notag
\left(\int_0^tS(s) u^\rho(t-s)ds\right)(t+c)&\\&=\int_0^{t+c}u^\rho(t-s,t+c-s)\frac{D(t+c)}{D(t+c-s)}ds+\int_{t+c}^tB_{ u^\rho(t-s)}(s-t-c)D(t+c)ds \notag\\&=D(t+c)\left(\int_0^{t+c}u^\rho(r-c,r)\frac{1}{D(r)}dr+\int_{0}^{-c}B_{ u^\rho(-c-r)}(r)dr\right). \label{33}
\end{align}
for a.e. $c\in[-T,0]$ and $t\in(-c,T]$.
Moreover, using \eqref{Az} we have 
\begin{align}\label{34}
-\mu\left(\mathcal{A}\int_0^tw(t-s)S(s)D ds\right)(t+c)= D(t+c)\left(w(-c)-\mu\int_0^{-c}B'_{D}(-c-s)w(s)ds\right).
\end{align}
As a result, by \eqref{32}-\eqref{34} the generalised mild solution of \eqref{1} takes the form
\begin{align}\label{36}
G(t,t+c)&=D(t+c)\Big(B_{G_0}(-c)+\int_0^{t+c}u^\rho(r-c,r)\frac{1}{D(r)}dr+\int_{0}^{-c}B_{ u^\rho(-c-r)}(r)dr\nonumber\\&+w(-c)-\mu\int_0^{-c}B'_{D}(-c-s)w(s)ds\Big)
\end{align}
for a.e. $c\in[-T,0]$ and for all $t\in[-c,T]$.
Hence for all $c\in[-T,0]$ the mapping $t\mapsto G(t,t+c)$ has an absolutely continuous version. 
Finally,  for a.e. $t\in[-c, T]$ the derivative $t\mapsto G(t,t+c)$ is equal to
\begin{align*}
\frac{d G(t,t+c)}{dt}&=-\delta(t+c)D(t+c)\Big(B_{G_0}(-c)+\int_0^{t+c}u^\rho(r-c,r)\frac{1}{D(r)}dr+\int_{0}^{-c}B_{ u^\rho(-c-r)}(r)dr\\&+ w(c)-\mu\int_0^{-c}B'_{D}(-c-s)w(s)ds\Big)+u^\rho(t,t+c)\\&=-\delta(t+c)G(t,t+c)+u^\rho(t,t+c),
\end{align*} 
which prove the equality \eqref{GFeihtinger2}.
\end{proof}
\begin{remark}\label{r:4}
By Theorem \ref{thm2}.4 and Theorem \ref{thm3}, if $G_0\in L^{\infty}(0,1)$, and $(u,u_0)\in U_{0,ad}\times U_{ad}$ then the generalised mild solution $G(u,u_0;G_0)$ of \eqref{1} is the solution of \eqref{1} along the characteristic lines as in \cite{Feichtinger2003}. In particular $G(u,u_0;G_0)$ belongs to $L^{\infty}(0,T;L^{\infty}(0,1))\cap C(0,T;L^2(0,1))$.  
\end{remark}

\section{The optimal solution to the goodwill model}
\label{s:optimal_sol}

\subsection{The existence and uniqueness of an optimal solution}

We prove the existence of an optimal solution to problem \eqref{1}, \eqref{J}  (see Theorem \ref{thm5}) using the classical results for a general extreme problem in a Hilbert space $H$ (see Theorem \ref{thm4}). 

Let $f\colon U\rightarrow \mathbb{R}$ be a functional defined on subset $U\subset H$. Consider the optimization problem
\begin{align}\label{EP}
\inf_{h\in U}f(h). 
\end{align}
  
\begin{theorem}\cite[Theorems 7.3.5, 7.3.7]{kurdila2005}\label{thm4} If the functional $f\colon U\rightarrow \mathbb{R}$ is lower semicontinuous, convex and coercive, and the set $U$ is not empty, closed and convex, then there exist a solution $u^*\in U$ to \eqref{EP} i.e. $f(u^*)=\inf_{h\in H} f(h)$. 
\end{theorem}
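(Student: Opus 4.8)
The plan is to use the direct method of the calculus of variations. Write $m:=\inf_{h\in U}f(h)$. Since $U\neq\emptyset$ and $f$ is real-valued, $m<+\infty$, so there is a minimizing sequence $(h_n)_{n\in\N}\subset U$ with $f(h_n)\to m$. First I would show that $(h_n)$ is bounded in $H$: if it were not, one could pass to a subsequence with $\|h_n\|\to\infty$, and coercivity of $f$ would force $f(h_n)\to+\infty$, contradicting $f(h_n)\to m<+\infty$.

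Next, since $H$ is a Hilbert space, hence reflexive, bounded sequences are weakly sequentially precompact; thus, after passing to a further subsequence (not relabelled), $h_n\rightharpoonup u^*$ weakly in $H$ for some $u^*\in H$. To see that $u^*\in U$, I would use that $U$ is convex and strongly closed, hence weakly closed (Mazur's theorem, equivalently Hahn--Banach separation of a point from a closed convex set), so the weak limit of the sequence $(h_n)\subset U$ lies in $U$.

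It then remains to pass to the limit in the functional. Since $f$ is convex and (strongly) lower semicontinuous, its epigraph $\{(h,\lambda)\in H\times\R : h\in U,\ f(h)\le\lambda\}$ is convex and strongly closed, hence weakly closed; consequently $f$ is weakly lower semicontinuous on $U$. Therefore
\begin{align*}
m\le f(u^*)\le\liminf_{n\to\infty}f(h_n)=m,
\end{align*}
where the first inequality is the definition of the infimum (using $u^*\in U$) and the second is weak lower semicontinuity. Hence $f(u^*)=m$; in particular $m=f(u^*)\in\R$ is finite and the infimum is attained at $u^*$, which is the asserted solution to \eqref{EP}.

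The coercivity estimate and the final chain of inequalities are mechanical; the crux of the argument is the two passages to the weak topology — that a strongly closed convex set is weakly closed, and that a strongly lower semicontinuous convex function is weakly lower semicontinuous — both resting on the Hahn--Banach separation theorem, together with the reflexivity of $H$ used to extract the weakly convergent subsequence. The convexity hypotheses on $U$ and on $f$ enter precisely and only at this point, to upgrade strong closedness and strong semicontinuity to their weak counterparts; I do not expect any genuine obstacle beyond invoking these standard facts correctly.
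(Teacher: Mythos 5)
Your argument is correct and complete: it is the standard direct method (coercivity $\Rightarrow$ bounded minimizing sequence, reflexivity $\Rightarrow$ weakly convergent subsequence, Mazur $\Rightarrow$ $U$ weakly closed, convexity plus strong lower semicontinuity $\Rightarrow$ weak lower semicontinuity via the epigraph). The paper does not prove this theorem at all---it imports it verbatim from Kurdila--Zabarankin (Theorems 7.3.5 and 7.3.7)---and your proof is essentially the one given there, so there is nothing to reconcile.
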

Notice that if $U$ is bounded, then in Theorem \ref{thm4} the coercivity of $f$ is superfluous. Moreover, if $f$ is additionally strictly convex, then the solution  $u^*\in U$ is unique.
%From Theorem \ref{thm4} we obtain:

\begin{theorem}\label{thm5}
Assume that \eqref{expstability} holds and let $G_0\in L^2(0,1)$ be an almost everywhere positive function. The optimal control problem \eqref{1} and \eqref{J} admits a unique solution.
\end{theorem}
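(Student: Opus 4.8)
The plan is to recast the problem as the minimisation of $F:=-J$ over the admissible set $\mathcal{U}:=U_{0,ad}\times U_{ad}$, viewed as a subset of the Hilbert space $H:=L^2(0,T)\times L^2((0,1)\times(0,T))$, and then to verify the hypotheses of Theorem~\ref{thm4}. Throughout, $G=G(u,u_0;G_0)$ denotes the generalised mild solution of \eqref{1}, which exists and obeys the estimates of Theorem~\ref{thm2}. First I would observe that $\mathcal{U}$ is non-empty (it contains $(0,0)$), convex, and closed in $H$ as an intersection of order intervals, and that it is bounded when $I<\infty$; in the case $I=\infty$ the coercivity established below compensates for the lack of boundedness.

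The key preliminary step is the positivity and monotonicity of the solution operator. Since $G_0\ge0$ a.e., $R,\delta\ge0$ and $u^\rho,u_0^\rho\ge0$ for admissible controls, I expect $G\ge0$ a.e.; to prove this rigorously I would invoke Theorem~\ref{thm3}. By \eqref{GFeihtinger1}--\eqref{GFeihtinger3} the boundary trace $t\mapsto G(t,0)$ satisfies a renewal equation whose kernel $a\mapsto R(a)D(a)$ is non-negative and whose forcing term is a non-negative combination of $G_0$, $u^\rho$ and $u_0^\rho$, so $G(t,0)\ge0$; integrating \eqref{GFeihtinger1}--\eqref{GFeihtinger2} along the characteristics with the integrating factor $e^{-\int\delta}$ then gives $G\ge0$ a.e. The same computation shows that, for fixed $G_0$, the map $(u^\rho,u_0^\rho)\mapsto G$ is affine and order-preserving — affineness is already transparent from Definition~\ref{df3}, since $t\mapsto\int_0^tS(t-s)u^\rho(s)\,ds$ and $w$ are linear in $(u^\rho,u_0^\rho)$ and $\mathcal{A}z$ is linear in $w$. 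Consequently $G^\gamma$ is well defined and, by Jensen's inequality, $\int_0^1 G^\gamma(t,a)\,da\le\norm{G(t,\cdot)}_{L^2(0,1)}^{\gamma}$, so that $F$ is finite on $\mathcal{U}$ by part~3 of Theorem~\ref{thm2}.

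Next I would establish strict convexity of $F$. Because $s\mapsto s^\rho$ is concave and non-decreasing on $[0,\infty)$ and $G$ is affine and order-preserving in $(u^\rho,u_0^\rho)$, one gets for admissible $(u_0^0,u^0),(u_0^1,u^1)$ and $\lambda\in[0,1]$ that $G$ at the convex combination dominates $\lambda G^1+(1-\lambda)G^0$ pointwise a.e.; composing with the concave non-decreasing $s\mapsto s^\gamma$ shows that $(u_0,u)\mapsto\int_0^1\int_0^Te^{-rt}K\,G^\gamma\,dt\,da$ is concave. The quadratic term $(u_0,u)\mapsto\int_0^1\int_0^Te^{-rt}\tfrac{\beta}{2}(u^2(t,a)+u_0^2(t))\,dt\,da$ is a positive-definite quadratic form on $H$ (the discount weight is bounded below by $e^{-rT}>0$), hence strictly convex, and the $c_f$-term only shifts $J$ by a constant; therefore $F$ is strictly convex on $\mathcal{U}$. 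For coercivity (needed only if $I=\infty$), the Jensen bound above combined with part~3 of Theorem~\ref{thm2} yields a constant $C$, depending on $T,K,\gamma,\rho,G_0$ and the uniform bound on $(S(t))_{t\in[0,T]}$, such that
\[
F(G,u_0,u)\ \ge\ \tfrac{\beta}{2}e^{-rT}\bigl(\norm{u}_{L^2((0,1)\times(0,T))}^{2}+\norm{u_0}_{L^2(0,T)}^{2}\bigr)-C\bigl(1+\norm{u}_{L^2((0,1)\times(0,T))}^{\rho\gamma}+\norm{u_0}_{L^2(0,T)}^{\rho\gamma}\bigr),
\]
and since $\rho\gamma\le1<2$ the quadratic term dominates, so $F\to+\infty$ as $\norm{(u_0,u)}_H\to\infty$.

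It remains to check lower semicontinuity, and in fact I would show $F$ is continuous on $\mathcal{U}$. If $(u_0^n,u^n)\to(u_0,u)$ in $H$, then \eqref{22} gives $w^n\to w$ in $L^2(0,T)$, and \eqref{21} together with \eqref{23} gives $G^n\to G$ in $C([0,T];L^2(0,1))$; using $|x^\gamma-y^\gamma|\le|x-y|^\gamma$ and Jensen's inequality one obtains $\bigl|\int_0^1\bigl((G^n)^\gamma-G^\gamma\bigr)(t,a)\,da\bigr|\le\norm{G^n(t,\cdot)-G(t,\cdot)}_{L^2(0,1)}^{\gamma}$, so the $G^\gamma$-part of $J$ converges uniformly in $t$ and hence after integration against $e^{-rt}$, while the quadratic part is obviously continuous on $H$. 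Thus $F$ is lower semicontinuous, strictly convex and coercive on the non-empty closed convex set $\mathcal{U}$, so Theorem~\ref{thm4} (with the observations following it) produces a unique minimiser $(u_0^*,u^*)$ of $F$ on $\mathcal{U}$, i.e.\ a unique maximiser of $J$; setting $G^*:=G(u^*,u_0^*;G_0)$ gives the optimal solution in the sense of Definition~\ref{d:optimalsol}. I expect the positivity/monotonicity step to be the main obstacle: the abstract formula in Definition~\ref{df3} contains the term $-\mu\mathcal{A}z$, in which neither $B_D'$ nor $w'$ has a fixed sign, so the argument must go through the characteristic representation of Theorem~\ref{thm3} and the non-negativity of the renewal kernel $R(a)D(a)$.
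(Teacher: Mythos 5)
Your proposal is correct and follows the same overall skeleton as the paper's proof: recast the problem as minimising $F=-J$ over $U_{0,ad}\times U_{ad}\subset L^2(0,T)\times L^2((0,1)\times(0,T))$ and verify the hypotheses of Theorem~\ref{thm4} (non-empty closed convex admissible set, continuity, coercivity when $I=\infty$, strict convexity); your coercivity estimate and continuity argument coincide with \eqref{28} and the paper's closing remark. Where you genuinely diverge is the concavity step, which is the heart of the uniqueness claim. The paper splits $J=J_1+(\text{quadratic})$ and proves that $J_1$ itself is \emph{strictly} concave via Lemma~\ref{l:positive}, composing the Niemycki operator $x\mapsto x^\rho$ with a positive, strictly concave functional of the affine control-to-state maps $G_1,G_2$, with positivity of the state asserted through the positivity of the Lotka--Sharpe--McKendrick semigroup. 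You instead derive positivity and order-preservation of $(u^\rho,u_0^\rho)\mapsto G$ from the characteristic/renewal representation of Theorem~\ref{thm3}, conclude only \emph{non-strict} concavity of the revenue term, and let the strictly convex quadratic advertising cost carry the strictness. Your route buys two things: it sidesteps the delicate point that the abstract formula of Definition~\ref{df3} contains $-\mu\mathcal{A}z$, whose sign is not transparent (you rightly observe that nonnegativity must be read off the nonnegative renewal kernel $R(a)D(a)$, not off the semigroup formula alone), and it does not rely on the strict-inequality hypothesis \eqref{concave} of Lemma~\ref{l:positive}, which as stated cannot hold when $f_1=f_2$. The paper's route, in exchange, yields strict concavity of the revenue term by itself, which is more than is needed here. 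One loose end is common to both arguments and worth flagging rather than counting against you: for $I=\infty$ the set $\{u\in L^\infty : u\ge 0\}$ is not closed in $L^2$, so closedness of the admissible set (your ``intersection of order intervals'', the paper's argument with the sets $A$ and $B$) is immediate only for $I<\infty$; for $I=\infty$ one should work on the $L^2$-positive cone, where coercivity and strict convexity still give a unique minimiser.
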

In the proof of Theorem \ref{thm5} we need the following lemma. 
\begin{lemma}\label{l:positive}
Let $\Ca{E}_1, \Ca{E}_2$ be Banach function spaces over a $\sigma$-finite
measure space $(S,\Sigma,\mu)$. Consider a operator $F:\Ca{E}_1\to\Ca{E}_2$ such that, $\mu$-almost everywhere, 
\begin{align}\label{concave}
F(\alpha f_1+(1-\alpha)f_2)>\alpha F(f_1)+(1-\alpha)F(f_2),
\end{align}
and let $f^*$ be a positive and strictly concave functional on $\Ca{E}_2$. Then,
the composition $f^*\circ F$ is strictly concave functional on $\Ca{E}_1$.
\end{lemma}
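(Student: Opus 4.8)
The assertion is that $f^{*}\circ F$ is strictly concave, i.e.\ that
\[
(f^{*}\circ F)\bigl(\alpha f_1+(1-\alpha)f_2\bigr)>\alpha\,(f^{*}\circ F)(f_1)+(1-\alpha)\,(f^{*}\circ F)(f_2)
\]
for every $f_1\neq f_2$ in $\Ca{E}_1$ and every $\alpha\in(0,1)$. Write $p:=F(f_1)$, $q:=F(f_2)$ and $m:=F(\alpha f_1+(1-\alpha)f_2)$, regarded as elements of $\Ca{E}_2$. The plan is to transport the pointwise inequality \eqref{concave} to the level of $\Ca{E}_2$, push it through $f^{*}$ using that $f^{*}$ is positive (order preserving), and finally extract a genuine strict gain from the strict concavity of $f^{*}$.

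First I would note that \eqref{concave} says exactly $m>\alpha p+(1-\alpha)q$ $\mu$-a.e.; hence $m\geq\alpha p+(1-\alpha)q$ in the $\mu$-a.e.\ pointwise order of the function space $\Ca{E}_2$, and, since equality fails on a set of positive measure, $m\neq\alpha p+(1-\alpha)q$ as elements of $\Ca{E}_2$. Because $f^{*}$ is a positive functional it is order preserving, so $f^{*}(m)\geq f^{*}\bigl(\alpha p+(1-\alpha)q\bigr)$, and it remains to compare $f^{*}\bigl(\alpha p+(1-\alpha)q\bigr)$ with $\alpha f^{*}(p)+(1-\alpha)f^{*}(q)$. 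If $p\neq q$ in $\Ca{E}_2$, the strict concavity of $f^{*}$ applied to this nontrivial convex combination gives $f^{*}\bigl(\alpha p+(1-\alpha)q\bigr)>\alpha f^{*}(p)+(1-\alpha)f^{*}(q)$, and chaining the two inequalities yields the claim. If instead $p=q$, then $\alpha p+(1-\alpha)q=p$ and what is needed is the strict inequality $f^{*}(m)>f^{*}(p)$; since $m\geq p$ $\mu$-a.e.\ one also has $2m-p\geq p$, and $m=\tfrac12 p+\tfrac12(2m-p)$ is a nontrivial convex combination because $m\neq p$, so strict concavity together with order preservation gives $f^{*}(m)>\tfrac12 f^{*}(p)+\tfrac12 f^{*}(2m-p)\geq f^{*}(p)$, as wanted.

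The one point that requires care is precisely that a \emph{strict} inequality must survive the passage through $f^{*}$: order preservation alone only transmits ``$\geq$'', and the strict concavity of $f^{*}$ only bites on a nondegenerate convex combination. The argument above isolates this issue and settles it by invoking the strictness built into the hypothesis \eqref{concave} (which guarantees $m\neq\alpha p+(1-\alpha)q$, and hence, in the degenerate case $p=q$, that $2m-p\neq p$). The remaining verifications — that $\Ca{E}_2$ is a vector space, so $2m-p\in\Ca{E}_2$, that ``positive functional'' is used in the order-preserving sense, and the bookkeeping with $\mu$-a.e.\ (in)equalities — are routine. I note also that in the intended application, where $F$ is $g\mapsto g^{\gamma}$ ($\gamma\in(0,1)$) acting on nonnegative goodwill profiles, $F$ is injective, so the degenerate case $p=q$ does not arise and the first alternative alone suffices.
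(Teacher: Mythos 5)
Your argument is correct and follows the same basic route as the paper's one-line proof: push the $\mu$-a.e.\ inequality \eqref{concave} through $f^{*}$ using order preservation, then invoke strict concavity of $f^{*}$ on the convex combination $\alpha F(f_1)+(1-\alpha)F(f_2)$. The difference is that you are more careful at exactly the two places where the paper is glib. The paper writes the chain $(f^*\circ F)(\alpha f_1+(1-\alpha)f_2)>f^*(\alpha F(f_1)+(1-\alpha)F(f_2))>\alpha f^*(F(f_1))+(1-\alpha)f^*(F(f_2))$, but the first inequality is only $\geq$ if ``positive'' means merely order preserving, and the second requires $F(f_1)\neq F(f_2)$, which is not guaranteed by the hypotheses. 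Your case split fixes both: when $F(f_1)\neq F(f_2)$ the strict concavity supplies the strict step and the $\geq$ from monotonicity suffices; when $F(f_1)=F(f_2)=p$ your decomposition $m=\tfrac12 p+\tfrac12(2m-p)$ with $m\neq p$ manufactures a nondegenerate convex combination and recovers $f^{*}(m)>f^{*}(p)$ from strict concavity plus monotonicity alone. You also correctly observe that the lemma is false if ``positive'' only means nonnegative-valued, so the order-preserving reading is the one the paper must intend, and that in the paper's application ($F(x)=x^{\rho}$ on nonnegative controls) the degenerate case does not occur. In short: same strategy, but your write-up closes genuine gaps in the paper's proof rather than introducing any.
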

\proof
Let $\alpha\in(0,1)$ and $f_1,f_2\in\Ca{E}_1$. Then, from the assumptions on $f^*$ and \eqref{concave} it follows that
\begin{align*}
(f^*\circ F)(\alpha f_1+(1-\alpha)f_2)>f^*(\alpha F(f_1)+(1-\alpha)F(f_2))>\alpha f^*( F(f_1))+(1-\alpha)f^*(F(f_2)),
\end{align*}
where in the last inequality we use strict concavity of $f^*$. 
%\halmos
\endproof

\proof[Proof of Theorem \ref{thm5}.]
Fix $G_0\in L^2(0,1)$ such that $G(a)> 0$ for a.e $a\in(0,1)$. We can rewrite the problem \eqref{1}, \eqref{J} as \eqref{EP} with  $H= L^2(0,T)\times L^2((0,T)\times(0,1))$ and the functional $f(u_0,u)=-J(G,u_0,u)$  defined on $U=U_{0,ad}\times U_{ad}\subset H$. The set of admissible controls $U$ is obviously nonempty and convex. To prove closedness of $U$ let $\{(u_{0,n},u_n)\}_{n\geq 1}$ be a  sequence of admissible controls converging in $H$-norm to $(u_0,u)$. We show that $u_0\in U_{0,ad}$ and $u\in U_{ad}$. Indeed, the sets $A=\{t\in(0,T):u_0(t)<0\}$ $B=\{t\in(0,T): u_0(t)-I>0\}$ are measurable and $\int_Au_{0,n}(t)dt\geq 0$, $\int_B(u_{0,n}(t)-I)dt\leq 0$ for all $n\geq 1$. Taking the limits in these two sequences of integrals we obtain $\int_Au_{0}(t)dt\geq 0$ and $\int_B(u_{0}(t)-I)dt\leq 0$. Thus $|A|=0$ and $|B|=0$ and we conclude that $0 \leq u_{0}(t)\leq I$ for a.e. $t\in(0,T)$. Hence $u_{0}\in U_{0,ad}$. The same argument can be used to prove that $u\in U_{ad}$. 

 Moreover, since $\rho,\gamma\in(0,1]$ one can prove that $J$ given by \eqref{J} is strictly concave. Indeed, first notice that $J$ can be represented as follows
\begin{align}
J(G,u_0,u)&=J_1(u_0,u)-\frac{\beta}{2}\int_0^1\int_0^Te^{-rt}(u^2(t,a)+u_0^2(t)-c_f)dtda\\
J_1(u_0,u)&=\int_0^1\int_0^Te^{-rt}K_\Pi\left(G_1(u^\rho)(t,a)+G_2(u_0^\rho)(t,a)\right)^\gamma dtda, 
\end{align}
for all $(u_0,u)\in H$ and $G(u_0,u;G_0)=G_1(u^\rho;G_0)+G_2(u^\rho_0)$ satisfying \eqref{1}, where $G_1:L^2((0,1)\times(0,T))\to C([0,T];L^2(0,1))$, $G_1(u;G_0)(t,a)=(S(t)G_0)(a)+(\int_0^tS(t-s)u(s)ds)(a)-\mu(\Ca{A}\int_0^tS(t-s)Dw_u(s)ds)(a)$, $w_u(s)=\int_0^1u(s,a)da$ and $G_2:L^2(0,T)\to C([0,T];L^2(0,1))$, $G_2(u_0)(t,a)=-\mu(\Ca{A}\int_0^tS(t-s)Du_0(s)ds)(a)$. 
Since any norm in a Hilbert space is strictly convex, the mapping $H\ni(u,v)\mapsto-\frac{\beta}{2}\int_0^1\int_0^Te^{-rt}(u^2(t,a)+v^2(t)-c_f)dtda$ is strictly concave. Hence it is enough to show that $J_1:H\to\R$ is strictly concave.    
We notice that $J_1$ is a composition of  the Niemycki operator $(u_0,u)\mapsto (F(u_0),F(u))$ on $H$  with the strictly concave function $F(x)=x^\rho$ and the positive and strictly concave functional $$(u_0,u)\mapsto \int_0^1\int_0^Te^{-rt}K_\Pi\left(G_1(u)(t,a)+G_2(u_0)(t,a)\right)^\gamma dtda$$ on $H$. Positivity of last functional follows by assumption $G_0 >0$ and by positivity of the Lotka-Sharp-McKendrick semigroup $(S(t))_{t\geq 0}$ (cf. \cite[Section 4 in Chapter IV ]{Engel2000}). Hence by Lemma \ref{l:positive}  $J_1$ is strictly concave. 

Furthermore, in the case of $I=\infty$ we show that the functional $f$ is coercive. For this purpose consider a sequence $\{(u_{0,n},u_n)\}_{n\geq 1}\subset U_{0,ad}\times U_{ad}$ such that $\|(u_{0,n},u_n\|_{L^2(0,T)\times L^2((0,T)\times(0,1))}\rightarrow\infty$, thus $\|u_n\|_{L^2((0,T)\times(0,1))}\rightarrow\infty$ or $\|u_{0,n}\|_{L^2(0,T)}\rightarrow\infty$. From Theorem \ref{thm2} for each element of sequence $\{(u_{0,n},u_n)\}_{n\geq1}$   there exists the generalised mild solution $G_n=G_n(u_{0,n},u_n;G_0)$ to \eqref{1}. Therefore, by Theorem  \ref{thm2}.3 we obtain
\begin{align}\label{28}
&|f(u_{0,n},u_n)|=\nonumber\\
&=\left|\int_0^{1}\int_0^{T}\left(\frac{\beta}{2}u_n^2(t,a)+\frac{\beta}{2}u_{0,n}^2(t)-K_\Pi G^\gamma_n(t,a)\right) dtda\right|\nonumber\\&\geq
\frac{\beta}{2}\left(\|u_n\|^2_{L^2((0,T)\times(0,1))}+\|u_{0,n}\|^2_{L^2(0,T)}\right)-K_\Pi\int_0^{1}\int_0^{T}| G_n(t,a)|^\gamma dtda\nonumber\\&\geq\frac{\beta}{2}\left(\|u_n\|^2_{L^2((0,T)\times(0,1))}+\|u_{0,n}\|^2_{L^2(0,T)}\right)-K_\Pi\int_0^{T}\left(\int_0^{1}| G_n(t,a)|^2 da\right)^{\frac{\gamma}{2}}dt\\&\geq
\frac{\beta}{2}\left(\|u_n\|^2_{L^2((0,T)\times(0,1))}+\|u_{0,n}\|^2_{L^2(0,T)}\right)-K_\Pi\sup_{t\in[0,T]}\| G_n(t)\|_{L^2(0,1)}^{\gamma}\nonumber\\&\geq
\frac{\beta}{2}\left(\|u_n\|^2_{L^2((0,T)\times(0,1))}+\|u_{0,n}\|^2_{L^2(0,T)}\right)-K_\Pi L_1^\gamma\left(\|G_0\|_{L^2(0,1)}+\|u\|^\rho_{L^2((0,1)\times(0,T))}+\|u_0\|^\rho_{L^2(0,T)}\right)^\gamma.\nonumber
\end{align}
Thus, since $\rho,\gamma\in(0,1]$, $|f(u_n,v_n)|\to \infty$ as $n$ tends to $\infty$. Hence the functional $f$ is coercive.

Since for arbitrary $(u,v)\in U_{ad}\times U_{0,ad}$ a generalised mild solution $G=G(u,v)\in C(0,T;L^2(0,1))$ to \eqref{1} continuously depends on $(u,v)\in H$, the functional $J:H\to \R$ is continuous.  

Therefore, from Theorem \ref{thm4} there exists a unique optimal solution to control problem \eqref{1}, \eqref{J}.
%\halmos 
\endproof
\subsection{Necessary optimality conditions}
In this section we assume that \eqref{expstability} hold and $G_0\in L^{\infty}(0,1)$ is  positive a.e. Then by Remark \ref{r:4} and Theorem \ref{thm5} there exists an unique optimal solution $(G^*,u_0^*,u^*)\in L^{\infty}(0,T;L^{\infty}(0,1))\cap C(0,T;L^2(0,1))\times U_{0,ad}\times U_{ad}$ to the problem of maximizing \eqref{J} subject to \eqref{1}. Furthermore, 
from Proposition 2 in \cite{Feichtinger2003} it follows that there exists a unique solution $\xi\colon\left[0,T\right]\times\left[0,1\right]\rightarrow \mathbb{R}$ to the adjoint system: 
\begin{align}\label{4}\left\{\begin{array}{lc}
\frac{\partial \xi(t,a)}{\partial t}+\frac{\partial \xi(t,a)}{\partial a}=K_\Pi e^{-rt}\gamma (G^*(t,a))^{\gamma-1}+\xi(t,a)\delta(a)- \xi(t,0)R(a)\medskip & (t,a)\in\left[0,T\right]\times\left[0,1\right],\\
\xi(T,a)=0& a\in\left[0,1\right], \\
\xi(t,1)=0 & t\in\left[0,T\right].\end{array}\right. 
\end{align}
We follow \cite{Feichtinger2003} in defining  
the Hamiltonian associated with boundary condition 
\begin{align*}
H_b(t,u_0)=&\xi(t,0)\left(\int_0^{1}\left(R(a)G^*(t,a)+u^\rho(t,a)\right)da+u_0^\rho\right)\\&-\int_0^{1}e^{-rt}\left(K_\Pi (G^*(t,a))^\gamma-\frac{\beta}{2}(u^*(t,a))^2\right)da+e^{-rt}\frac{\beta}{2}u_{0}^2,
\end{align*}
for a.e. $t\in[0,T]$ and every $u_0\in[0,I]$, and the distributed Hamiltonian takes the form
\begin{align*}
H(t,a,u)=&e^{-rt}\left(\frac{\beta}{2}u^2+\frac{\beta}{2}(u^*_0)^2(t)+C_f-K_\Pi (G^*(t,a))^\gamma\right)+\xi(t,a)\left(-\delta(a)G^*(t,a)+u^\rho\right)\\&+\xi(t,0)\left(R(a)G^*(t,a)+u^\rho\right).\end{align*}
for a.e.  $(t,a)\in[0,T]\times[0,1]$ and every $u\in[0,I]$.

Based on maximum principle introduced in \cite{Feichtinger2003}  the optimal solution for the problem \eqref{J} with \eqref{1} satisfies 
%\begin{align*}
%\frac{\partial H_b(t,v)}{\partial v}&=\rho \xi(t,0)v^{\rho-1}+e^{-rt}\beta v=\frac{\rho \xi(t,0)+e^{-rt}\beta v^{2-\rho}}{v^{1-\rho}} \\
%\frac{\partial H_b(t,u_0^*(t))}{\partial v}(v-u_0^*(t))&\geq 0
%\end{align*}
%Thus,
 
\begin{align}\label{v}
u_0^*(t)=\left\{\begin{array}{ll}
0 & \textrm{ for } \xi(t,0)> 0\\
\left(- \frac{\rho}{\beta}e^{rt}\xi(t,0)\right)^\inv{2-\rho}& \textrm{ for } \xi(t,0)\in[0,-\frac{\beta}{\rho}e^{-rt}I^{2-\rho}]\\
I & \textrm{ for } \xi(t,0)<-\frac{\beta}{\rho}e^{-rt}I^{2-\rho}
\end{array}\right.,
\end{align}
 and
%\begin{align*}
%&\frac{\partial H(t,a,u^*)}{\partial u}=e^{-rt}\beta u^*(t,a)+\rho\xi(t,a)(u^*(t,a))^{\rho-1}+\rho\xi(t,0)(u^*(t,a))^{\rho-1}=0 
%\end{align*}
\begin{align}\label{u}
u^*(t,a)=\left\{\begin{array}{ll}
0 & \textrm{ for } \xi(t,0)+\xi(t,a)> 0\\
\left(-\frac{\rho}{\beta}e^{rt}\left(\xi(t,a)+\xi(t,0)\right)\right)^{\inv{2-\rho}}& \textrm{ for } \xi(t,0)+\xi(t,a)\in[0,-\frac{\beta}{\rho}e^{-rt}I^{2-\rho}]\\
I & \textrm{ for } \xi(t,0)+\xi(t,a)<-\frac{\beta}{\rho}e^{-rt}I^{2-\rho}
\end{array}\right..
\end{align}
 for a.e. $(t,a)\in[0,T]\times[0,1]$.

Summarizing the above considerations, the optimal triple $(G^*,u_0^*,u^*)$ is the solution of the following system
\begin{align}\label{5}
\left\{\begin{array}{lc}
\frac{\partial G^*(t,a)}{\partial t}+\frac{\partial G^*(t,a)}{\partial a}+\delta(a) G^*(t,a)=(u^*(t,a))^\rho\medskip & (t,a)\in\left[0,T\right]\times\left[0,1\right],\\
G^*(t,0)=\int_0^{1}\left(R(a)G^*(t,a)+(u^*(t,a))^{\rho}\right)da+(u^*_0(t))^{\rho}& t\in\left[0,T\right], \\
G^*(0,a)=G_0(a) & a\in\left[0,1\right]\\
\frac{\partial \xi(t,a)}{\partial t}+\frac{\partial \xi(t,a)}{\partial a}=K_\Pi e^{-rt}\gamma (G^*(t,a))^{\gamma-1}+\xi(t,a)\delta(a)- \xi(t,0)R(a)\medskip & (t,a)\in\left[0,T\right]\times\left[0,1\right],\\
\xi(T,a)=0& a\in\left[0,1\right], \\
\xi(t,1)=0 & t\in\left[0,T\right].
\end{array}\right. 
\end{align} 
where $u_0^*, u^*$ are given by \eqref{v} and \eqref{u}.
\section{Numerical solution} 

\label{s:numerical}
The system of equations \eqref{5} does not possess an explicit solution. Therefore, in order to analyse the properties of optimal trajectories, we solve \eqref{5} numerically. For this purpose we apply well-known approach in the numerical analysis of PDEs - so-called method of lines (MOL) (cf. \cite{kamont1999}, \cite{Schiesser2009}).
In the first step we use a finite difference approximation to discretize the space variable $a$ on a selected space mesh.  
Thus, let $\{ a_0=0,a_1,a_2,\ldots,a_N=1\}$ be uniform grid of the consumers' segments and $\Delta a=\Delta a_i=a_i-a_{i-1}$ be the diameter of this division. In the segment $a_i$ for $i=0,1,\ldots,N$ we denote: $G_i^*(t)=G^* (t,a_i)$, $\xi_i (t)=\xi(t,a_i)$, $R_i=R(a_i)$, $\delta_i=\delta(a_i)$, $u_i^*(t)=u^*(t,a_i )$.

Moreover, we apply the composite trapezoidal rule for the approximation of the definite integral \cite[p.153]{gautschi1997}
$$\int_0^{1}\left(R(a)G^*(t,a)+(u^*(t,a))^{\rho}\right)da \approx \Delta a\left(\frac{1}{2}f_1(t)+\sum_{i=2}^{N-1}f_i(t)+\frac{1}{2}f_N(t)\right),$$ where $$f_i(t)=R_iG_i^*(t)+\left(u_i^*(t)\right)^{\rho}\quad \textrm{for}\quad i=1,\ldots,N,$$  
and the explicit and the implicit Euler schemes as the approximations of derivatives:
\begin{align*}
\frac{\partial G^*(t,a_i )}{\partial a}=\frac{G_i^* (t)-G_{i-1}^* (t)}{\Delta a},\quad  \textrm{ dla } i=1,2,\ldots,N,\\
\frac{\partial \xi(t,a_i )}{\partial a}=\frac{\xi_{i+1}(t)-\xi_i (t)}{\Delta a},\quad \textrm{ dla } i=0,1,\ldots,N-1.\end{align*}
Therefore, the system \eqref{5} is transformed to the system of $2N$ ordinary differential equations and the resulting system becomes
\begin{align}\label{6}
\left\{\begin{array}{ll}
\frac{dG_i^* (t)}{dt}=-G_i^* (t)\left(\delta_i+\frac{1}{\Delta a}\right)+ \frac{1}{\Delta a}G_{i-1}^* (t)+\left(u_i^* (t)\right)^{\rho}, & t\in(0,T],\ i=1,\ldots,N,\medskip\\
G_0^* (t)=\Delta a\left(\frac{1}{2}f_1(t)+\sum\limits_{i=2}^{N-1}f_i(t)+\frac{1}{2}f_N(t)\right)+\left(u_0^* (t)\right)^{\rho}, &  t\in(0,T],\\
G_i^* (0)=G^*_{0,i}, & i=0,\ldots,N,\\
\frac{d\xi_i (t)}{dt}=K_\Pi e^{-rt}\gamma (G_i^* (t))^{\gamma-1}-\xi_0(t)R_i+\xi_i (t)\left(\delta_i +\frac{1}{\Delta a}\right)-\frac{1}{\Delta a}\xi_{i+1}(t),& t\in (0,T], i=1,\ldots,N-1,\medskip\\
\xi_N(t)=0,& t\in[0,T),\\
\xi_i(T)=0,& i=0,\ldots,N-1,\\
f_i(t)=R_iG_i^*(t)+\left(u_i^*(t)\right)^{\rho} & i=0,\ldots,N\\
\end{array}
\right.
\end{align}
with the controls $u_i^*$ of the forms
\begin{equation*}
u_0^*(t)=\left\{\begin{array}{ll}
0 & \textrm{ for } \xi_0(t)> 0\\
\left(- \frac{\rho}{\beta}e^{rt}\xi_0(t)\right)^\inv{2-\rho}& \textrm{ for } \xi_0(t)\in\left[0,-\frac{\beta}{\rho}e^{-rt}I^{2-\rho}\right]\\
I & \textrm{ for } \xi_0(t)<-\frac{\beta}{\rho}e^{-rt}I^{2-\rho}
\end{array}\right.,
\end{equation*}
 and for $i=1,\ldots,N$

\begin{equation*}
u_i^*(t)=\left\{\begin{array}{ll}
0 & \textrm{ for } \xi_0(t)+\xi_i(t)> 0\\
\left(-\frac{\rho}{\beta}e^{rt}\left(\xi_i(t)+\xi_0(t)\right)\right)^{\inv{2-\rho}}& \textrm{ for } \xi_0(t)+\xi_i(t)\in\left[0,-\frac{\beta}{\rho}e^{-rt}I^{2-\rho}\right]\\
I & \textrm{ for } \xi_0(t)+\xi_i(t)<-\frac{\beta}{\rho}e^{-rt}I^{2-\rho}
\end{array}\right..
\end{equation*}
The system \eqref{6} can be posed a non-linear boundary value problem (BVP) and it is solved with the Matlab solver bvp5c. 

In this approach, one can encounter two main difficulties. The first complication is the choice of an adequate number of spatial grid. The grid containing a very large number of nodes causes the MOL approximation \eqref{6}  is close to the system \eqref{5}. However, it increases the number  of ordinary differential equations in \eqref{6}, thereby increasing time to solution and reducing stability and accuracy of solution to \eqref{6}. The former is closely connected to maximal ratio of the time step $\Delta t$ and the space step $\Delta a$ (i.e. Courant-Friedricks-Lewy number) which should be small enough.
 %In this case some experimentation with the number of grid point, and the observation of the resulting solution to infer the degree of accuracy, may be required. 

The second difficulty occur with implementation of the Matlab solver bvp5c. For BVP solutions the most difficult part is providing an initial approximation to the solution i.e. guess function such that bvp5c solver leads to convergence (see \cite{shampine2003}). 
Therefore, we apply the iterative procedure for solving the system \eqref{6}. At the beginning, we establish a very sparse mesh for space division and based on the polynomial interpolation with respect to the initial condition in \eqref{6} we obtain first guess function and then, using bvp5c procedure we find the initial solution to \eqref{6}. Next, we increase the division of space and find a new guess function based on polynomial interpolation of the initial solution to \eqref{6} and using it we solve the system \eqref{6} again. This scheme is repeated until we obtain a sufficient degree of accuracy of the solutions.

The analysis of convergence of some numerical MOL schemes can be found in \cite{verwer1984} \cite{kamont1999}, but the convergence of the solutions of \eqref{6} to the solution of PDEs \eqref{5} is still an open problem.

\section{Simulation of the goodwill model}
 \label{s:simulation}

In this section, using the results from  Sections \ref{s:optimal_sol}--\ref{s:numerical}, we will find the optimal advertising strategies and the corresponding optimal trajectories of goodwill. In particular, we will draw attention to the impact of different values of the goodwill elasticity of demand ($\epsilon_g$) and the parameter of the advertising response function ($\rho$) on the optimal solution and the level of the firm's profit. We consider a durable experience product. Hence the consumers do not purchase the product frequently and they learn about the attributes of the product after using it for some time (see \cite{nelson1974}). Moreover, we assume the product is low quality and the longer consumers use  the product, the lower the proportion of them evaluate it positively.  Therefore, we assume that the rate of consumer recommendation $R$ is decreasing with respect to $a$ and takes the form $R(a)=\frac{3}{5}-\frac{3}{21}\sqrt{a}$. Furthermore, we take an a increasing  goodwill depreciation rate: $\delta(a)=1-\frac{0.5}{1-e^{-1}}e^{-a}$, which reflects the fact that as time goes by, more and more customers might become disappointed about the product's functionality. 

Different values of the goodwill elasticity  of demand $\epsilon_g$ are related to the consumer response to advertising. A low goodwill elasticity of demand may occurr in a situation where the consumer has commitments which block the use of a substitute product. Therefore, despite the fact that advertising has convinced the consumer to use another product, the consumer is only  able to purchase  an additional part of the service. A good example is that of mobile operators, where  post-paid service requires the consumers to sign a contract. Thus, their contribution to demand is relatively small. By contrast, pre-paid customers do not have any obligation to the mobile operator and may change firms at any time. Therefore, the goodwill elasticity of demand for this group of users is high.

In addition, we examine how the non-linear shape of the advertising response function in the goodwill equation affects the optimal advertising strategies, optimal goodwill path, and  the firm's profit. For these reasons, we analyse the linear $\rho=1$ and concave-downward $\rho=0.5$ advertising responses.

Besides, we assume that the rate of interest $r$ is equal to 2.8\%, the length of the product life cycle $T=1$, the unit advertising cost $\beta=0.16$, and the parameter $K=0.34$. The initial level of goodwill is $G_0(a)=1.5$. 

The results of the simulations are shown in Figures \ref{f3}--\ref{f2}. Each graphical presentation  consist of the following four plots (from left to right): contour plot of the optimal advertising strategy, 3D plot of the optimal advertising strategy, contour plot of the optimal goodwill path, 3D plot of the optimal goodwill path. 

We find two types of optimal advertising strategies: we will refer to them as `supportive' and `strengthening'. The first maintains the level of goodwill at most at its initial level, while the latter causes a significant increase in the level of goodwill from its initial value. Different types of optimal advertising strategies are responsible for different shapes of the associated optimal goodwill paths.  The optimal paths of product goodwill $G^*$ for a strengthening strategy reaches a maximum value for segments of consumers with short experience and these values spread over time to the segments of consumers with longer experience. Whereas for the second type, the maximum value of $G^*$ is achieved in all segments at the beginning of the product life cycle and then decreases, and the rate of decline is greatest among consumers with a short usage experience.

The `supportive' strategies are found in scenarios with low goodwill elasticity of demand. Moreover, one may observe two shapes of these optimal advertising strategies in each market segment $a$: decreasing concave (Figure \eqref{f3}) and parabolic with a maximum  (Figure \eqref{f4}). 

\begin{figure}[h!]
\begin{center}$
\begin{array}{cccc} \includegraphics[scale=0.2]{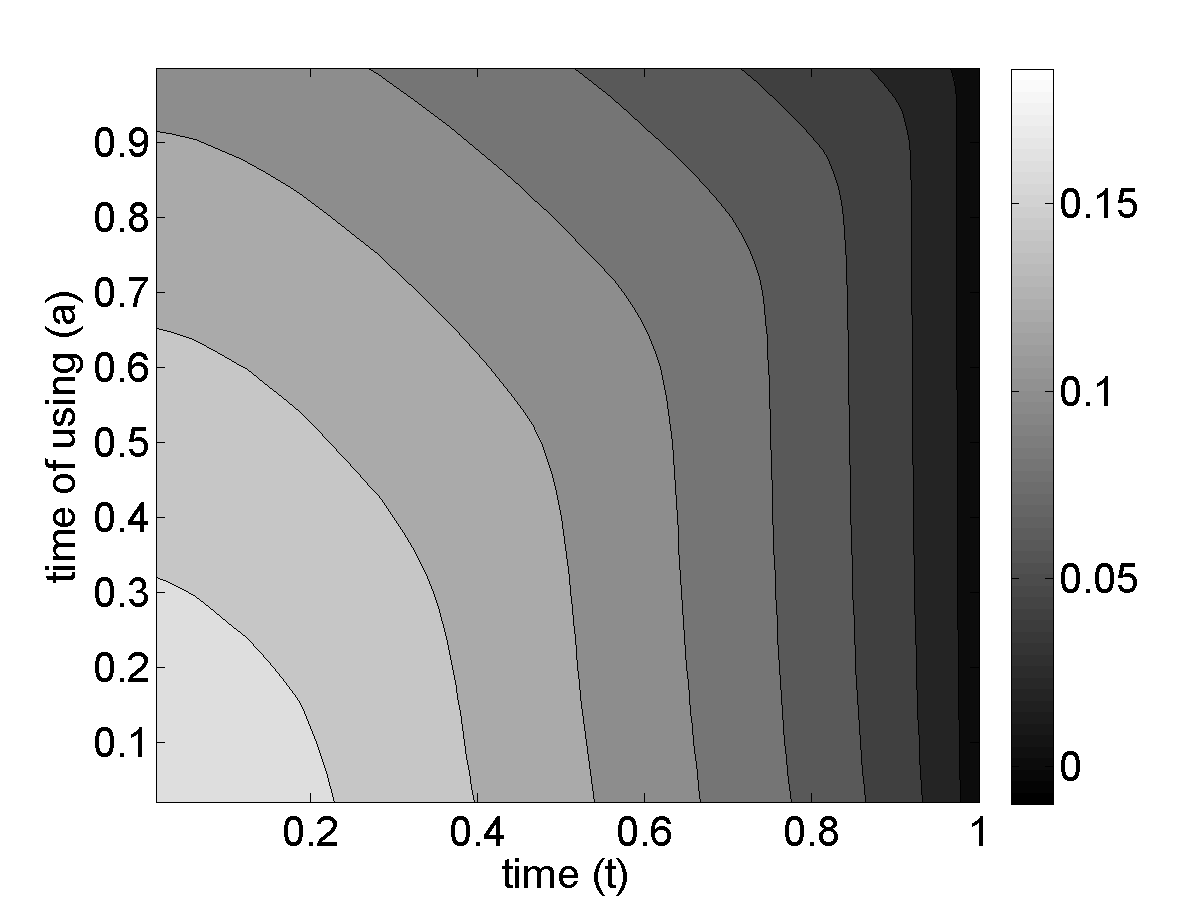} & \includegraphics[scale=0.2]{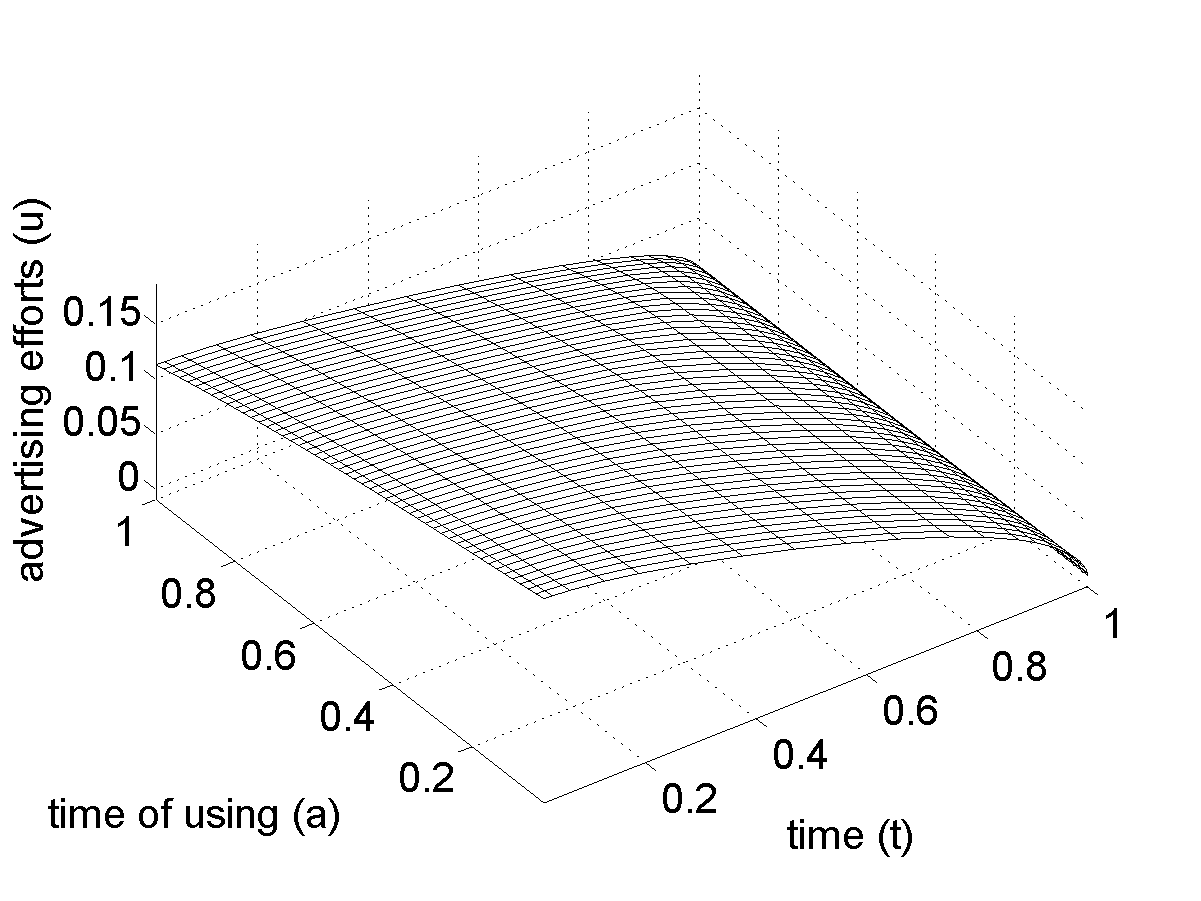} &
\includegraphics[scale=0.2]{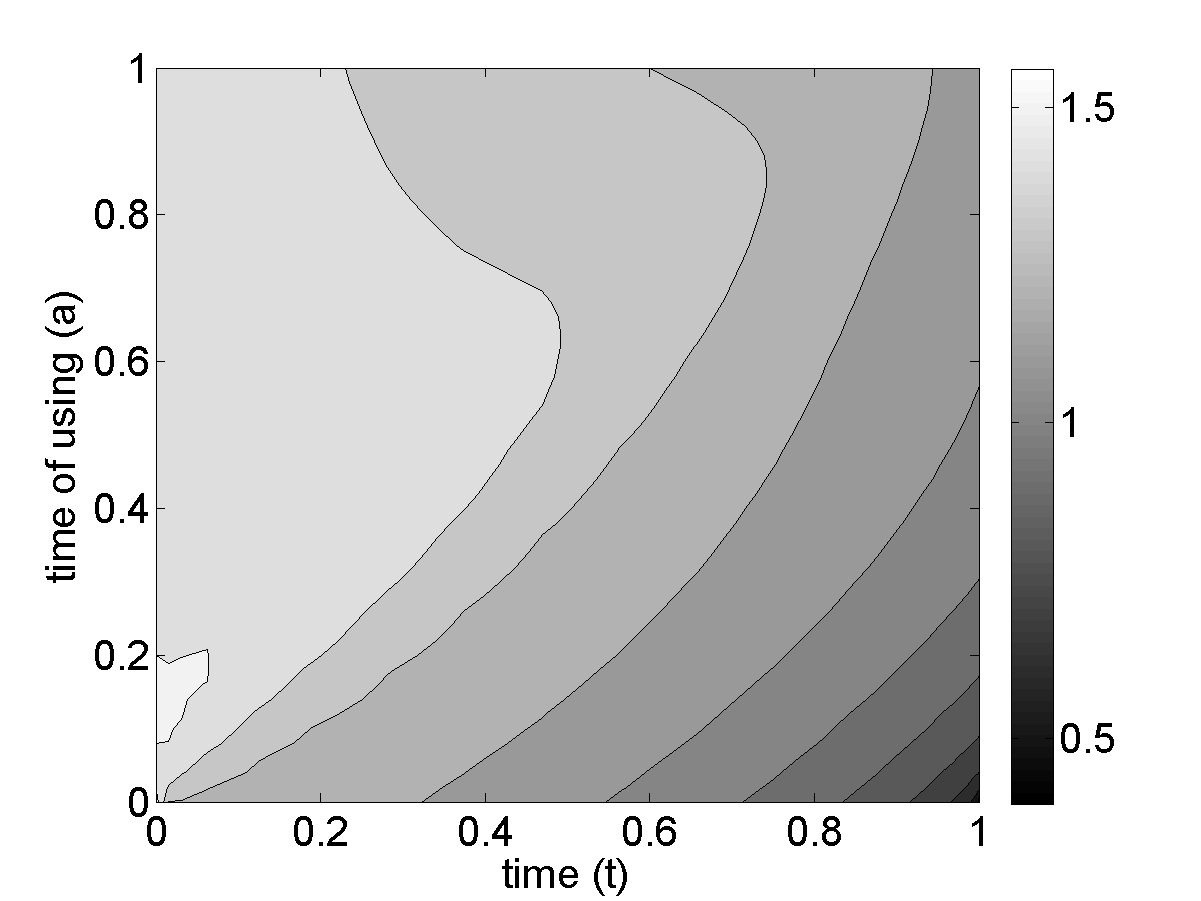} & \includegraphics[scale=0.2]{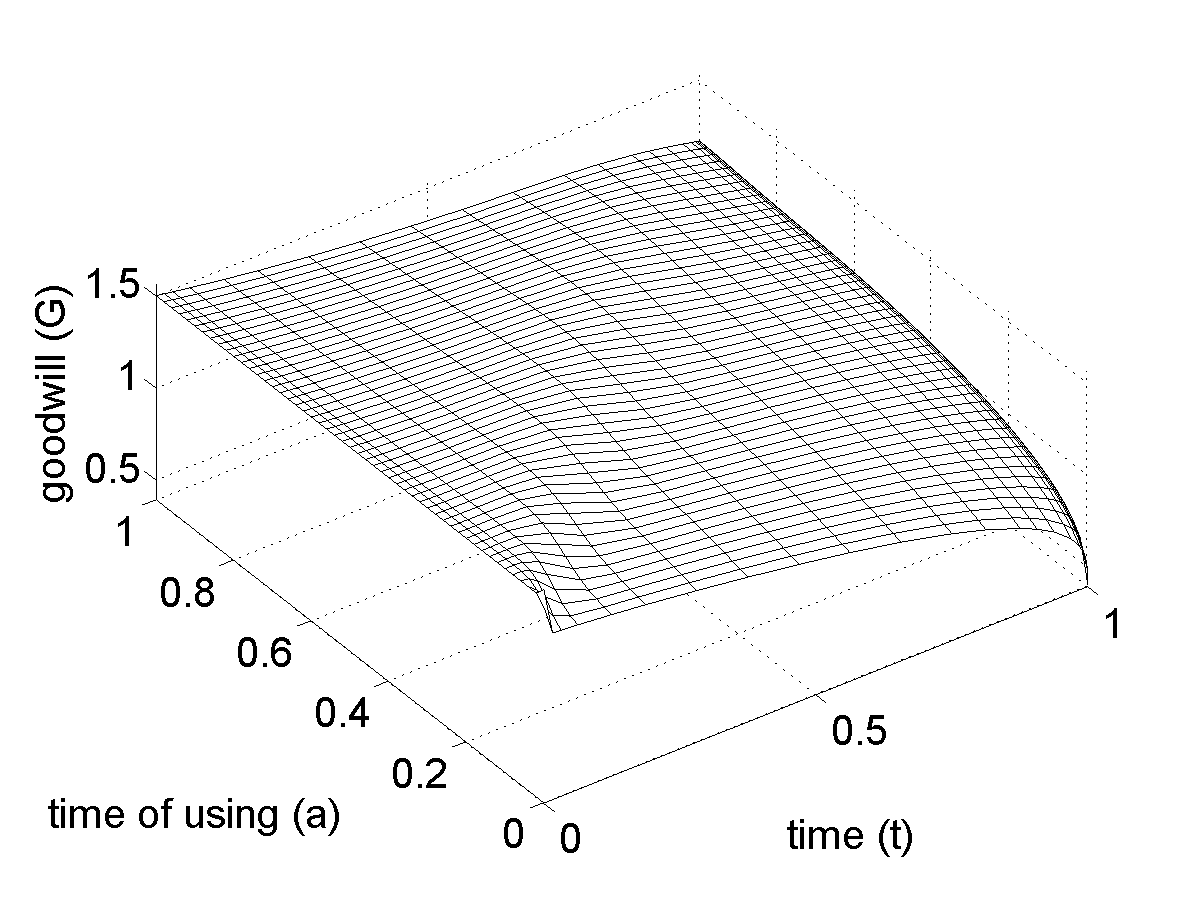}
\end{array}$
\end{center}
\caption{\textit{The optimal advertising strategy $u^*$ and optimal goodwill path $G^*$ in the experiment for $\epsilon_g=0.1$; $\rho=0.5$.}}
\label{f3}
\end{figure}

\begin{figure}[h!]
\begin{center}$
\begin{array}{cccc} \includegraphics[scale=0.2]{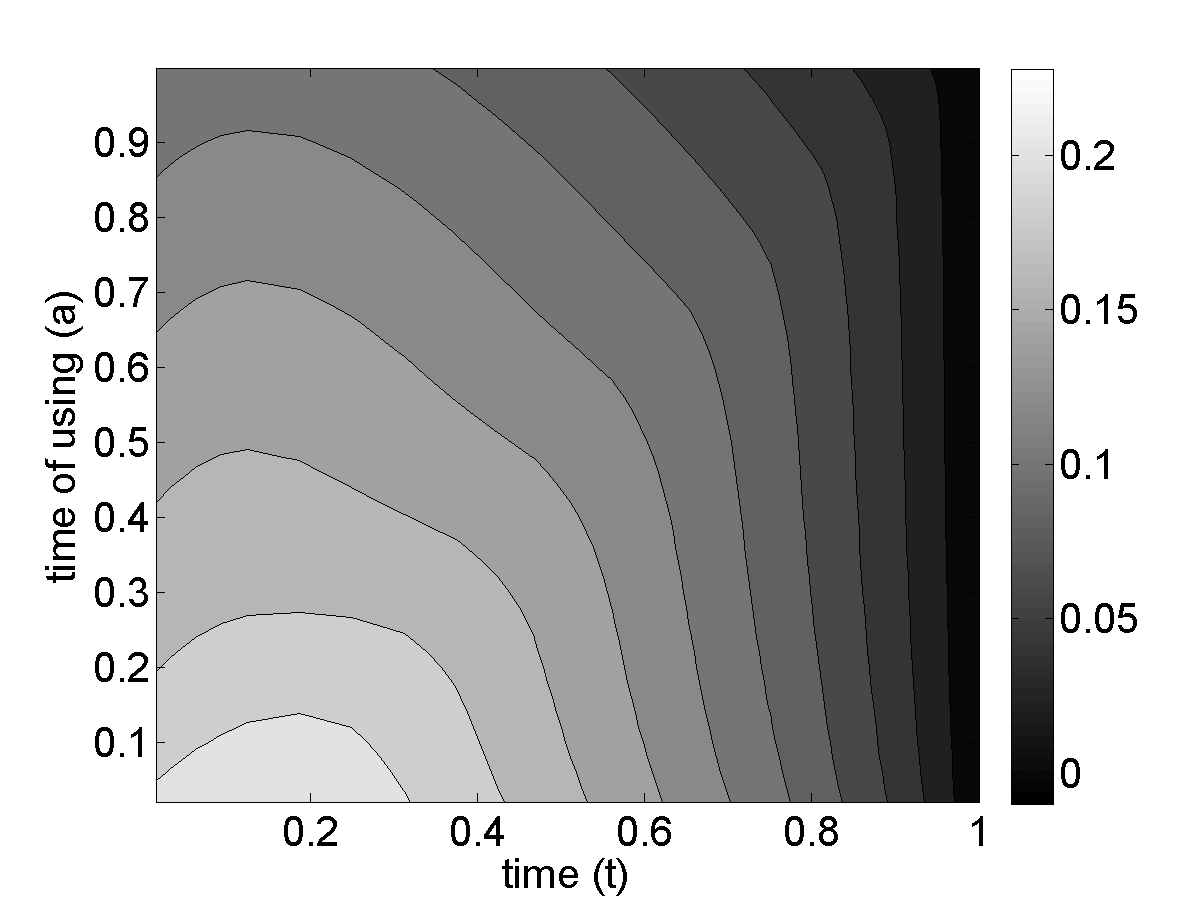} & \includegraphics[scale=0.2]{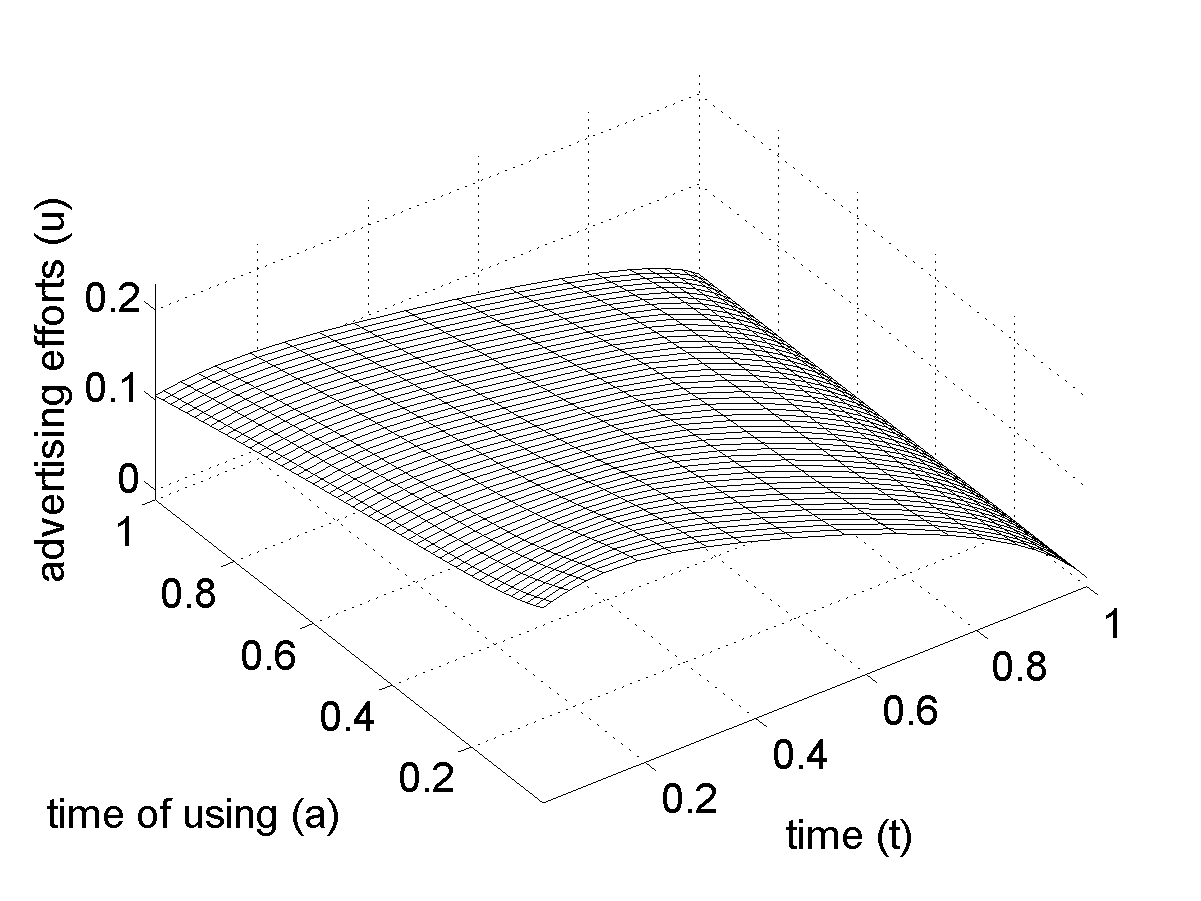} &
\includegraphics[scale=0.2]{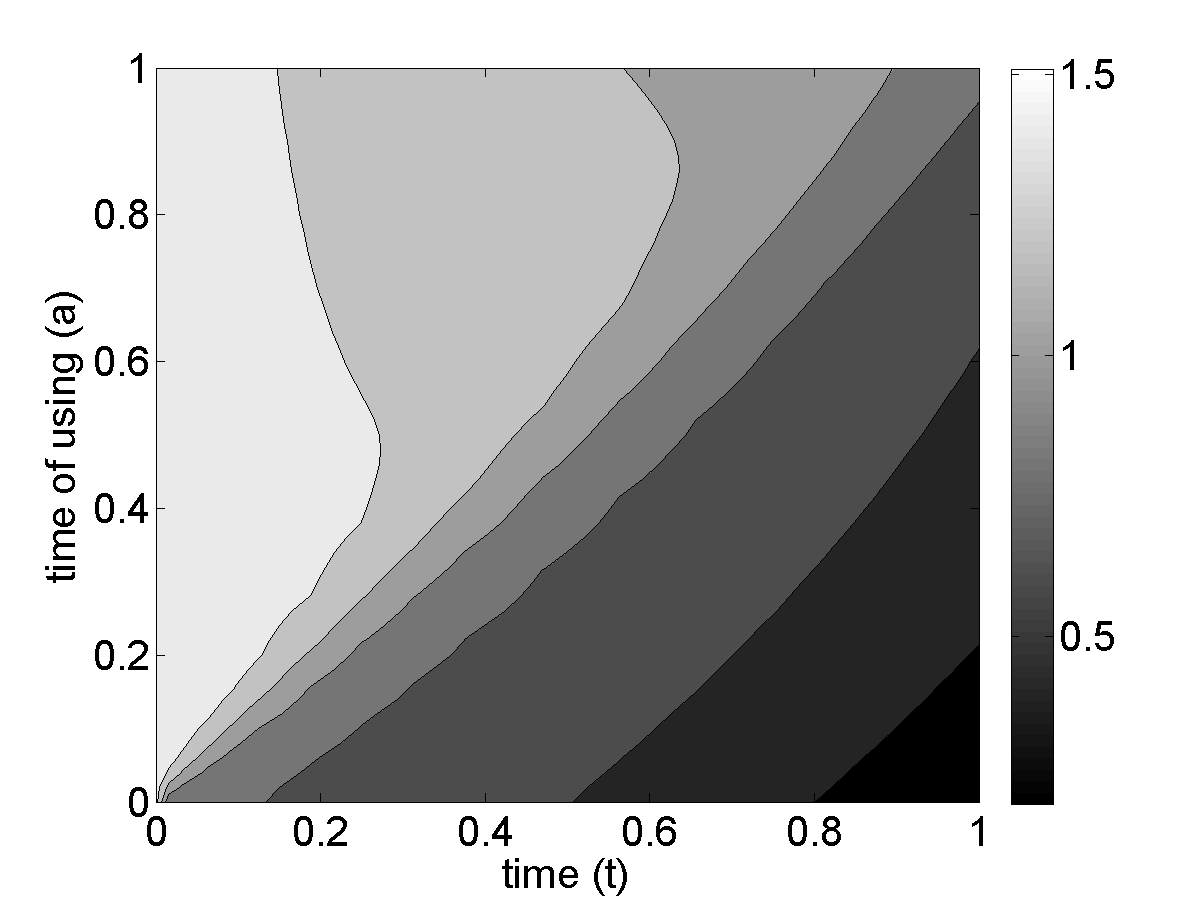} & \includegraphics[scale=0.2]{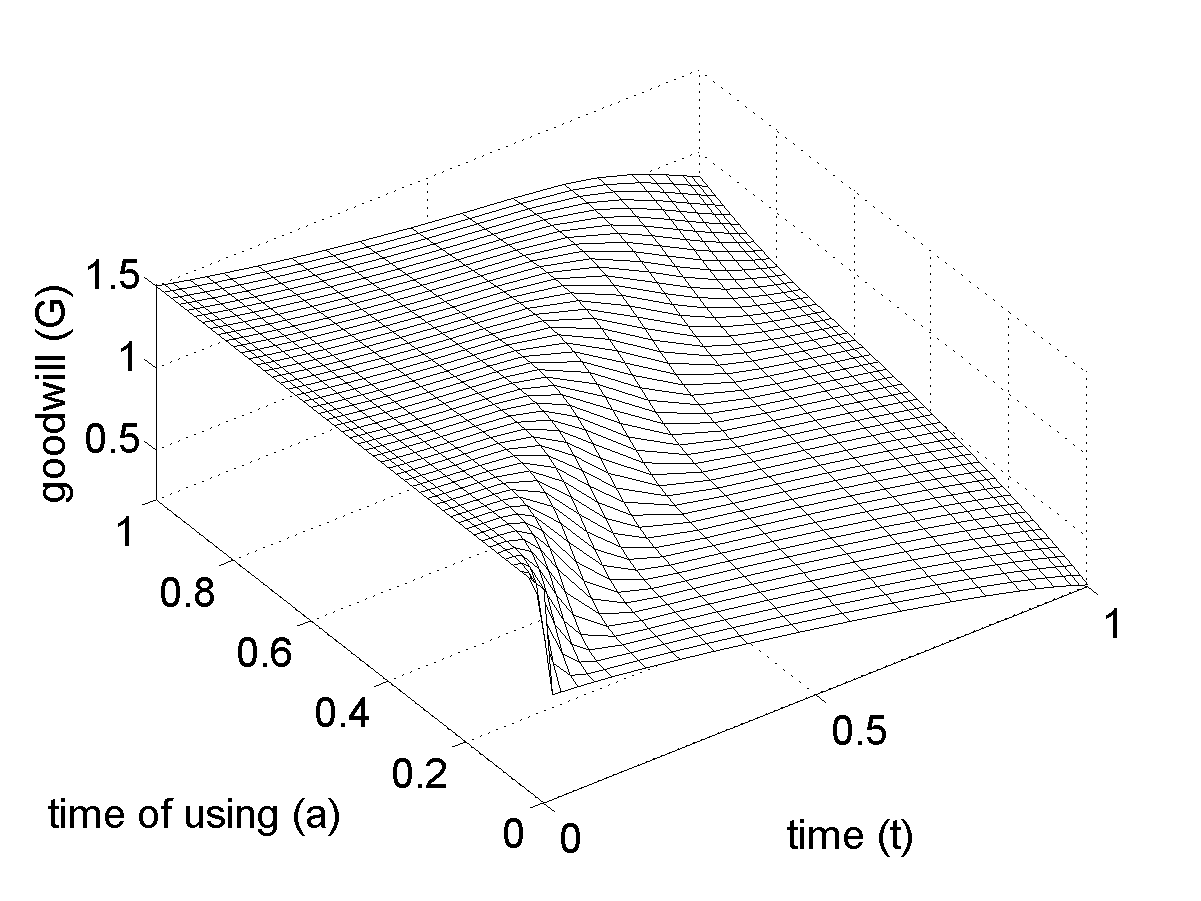}
\end{array}$
\end{center}
\caption{\textit{The optimal advertising strategy $u^*$ and optimal goodwill path $G^*$ in the experiment for $\epsilon_g=0.1$ and $\rho=1$.}}
\label{f4}
\end{figure}

A decreasing concave strategy can be found for $\rho=0.5$, and the parabolic with a maximum for $\rho=1$. This implies that including a non-linear advertising response function in the goodwill model results in the maximum level of $u^*$ occurring much later.  Moreover, the maximum level of optimal strategies with decreasing concave shape is smaller by 24\% than for the strategy with a parabolic shape. In a further part of this section, we will explore the financial consequences of these findings for the company. 

The second type of optimal advertising strategies are the strengthening strategies and they occur in our experiments for high values of the goodwill elasticity of demand and for  both values of the parameter $\rho$ (see Figures \eqref{f1} and \eqref{f2}). 

\begin{figure}[h!]
\begin{center}$
\begin{array}{cccc} \includegraphics[scale=0.2]{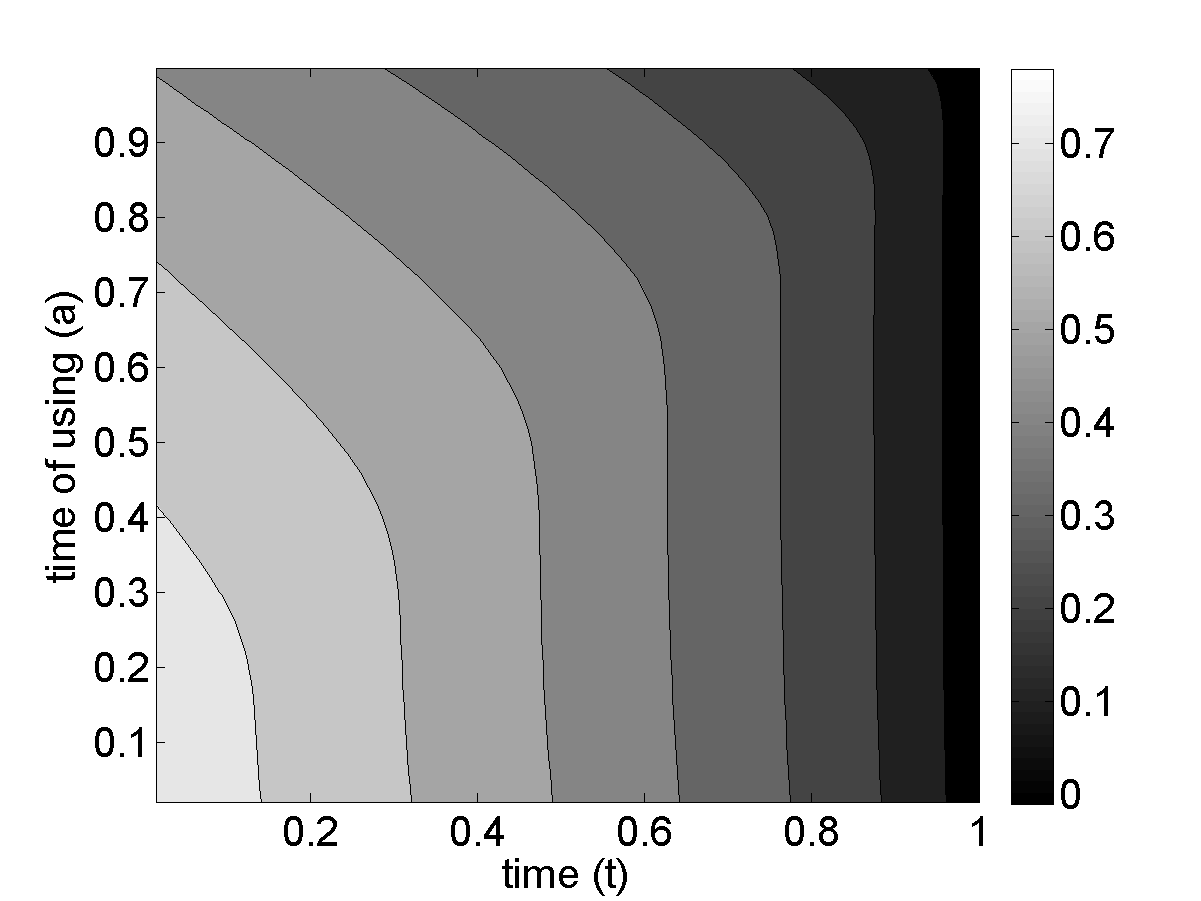} & \includegraphics[scale=0.2]{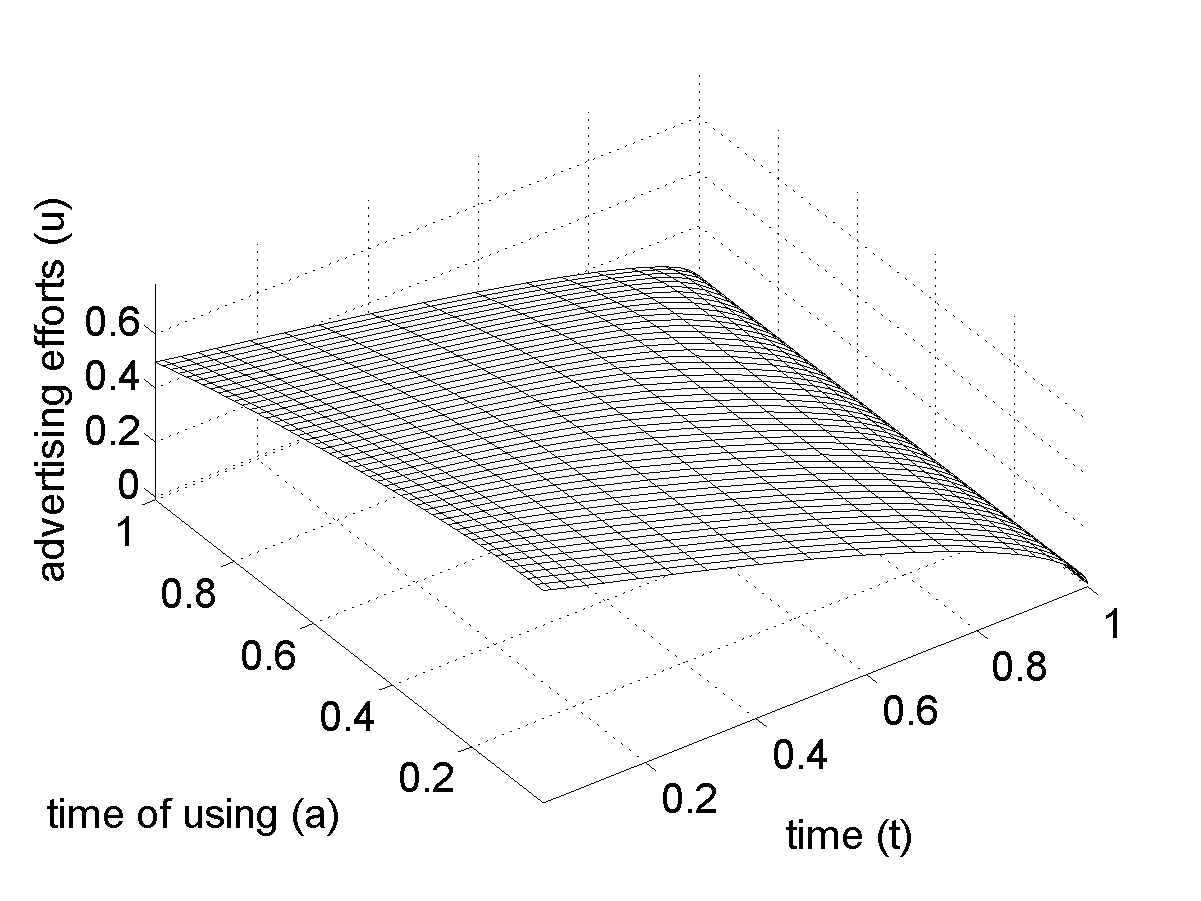} &
\includegraphics[scale=0.2]{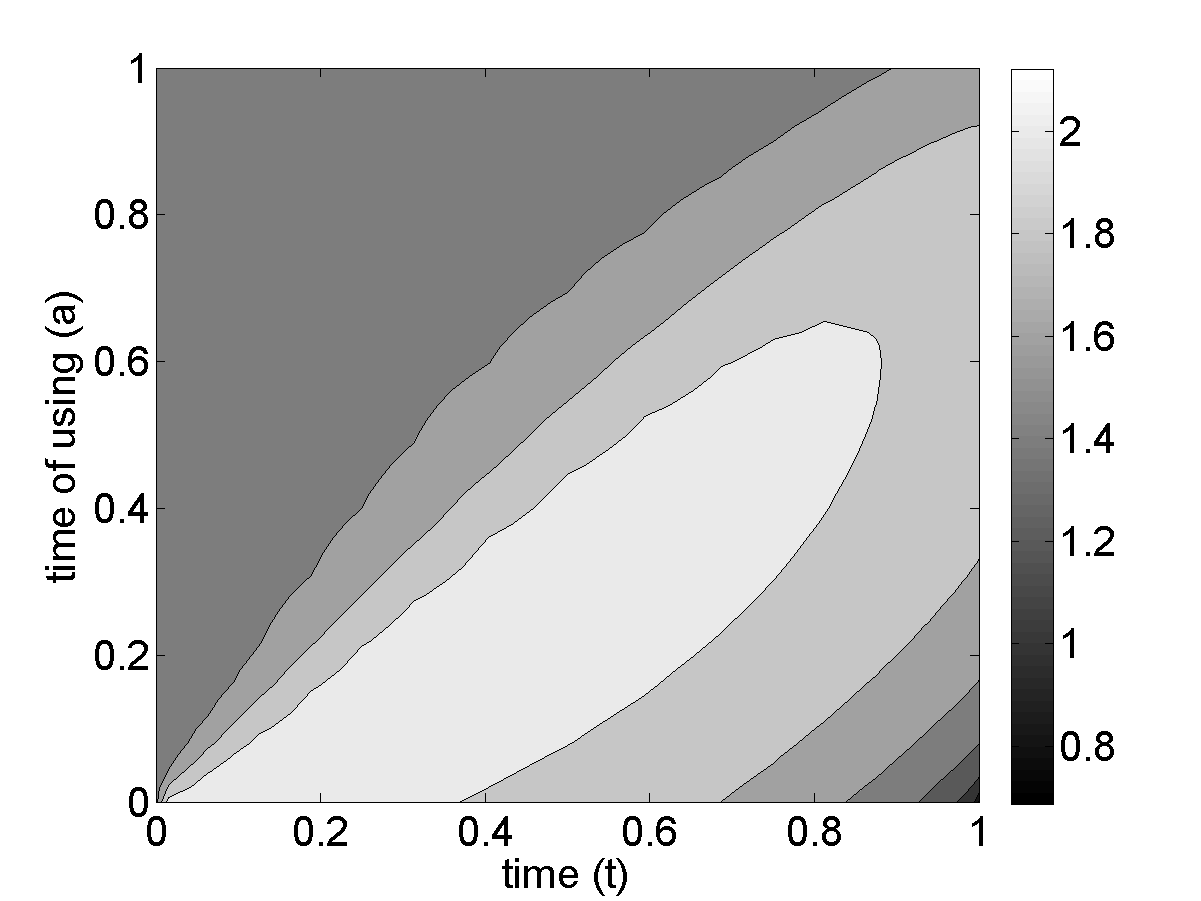} & \includegraphics[scale=0.2]{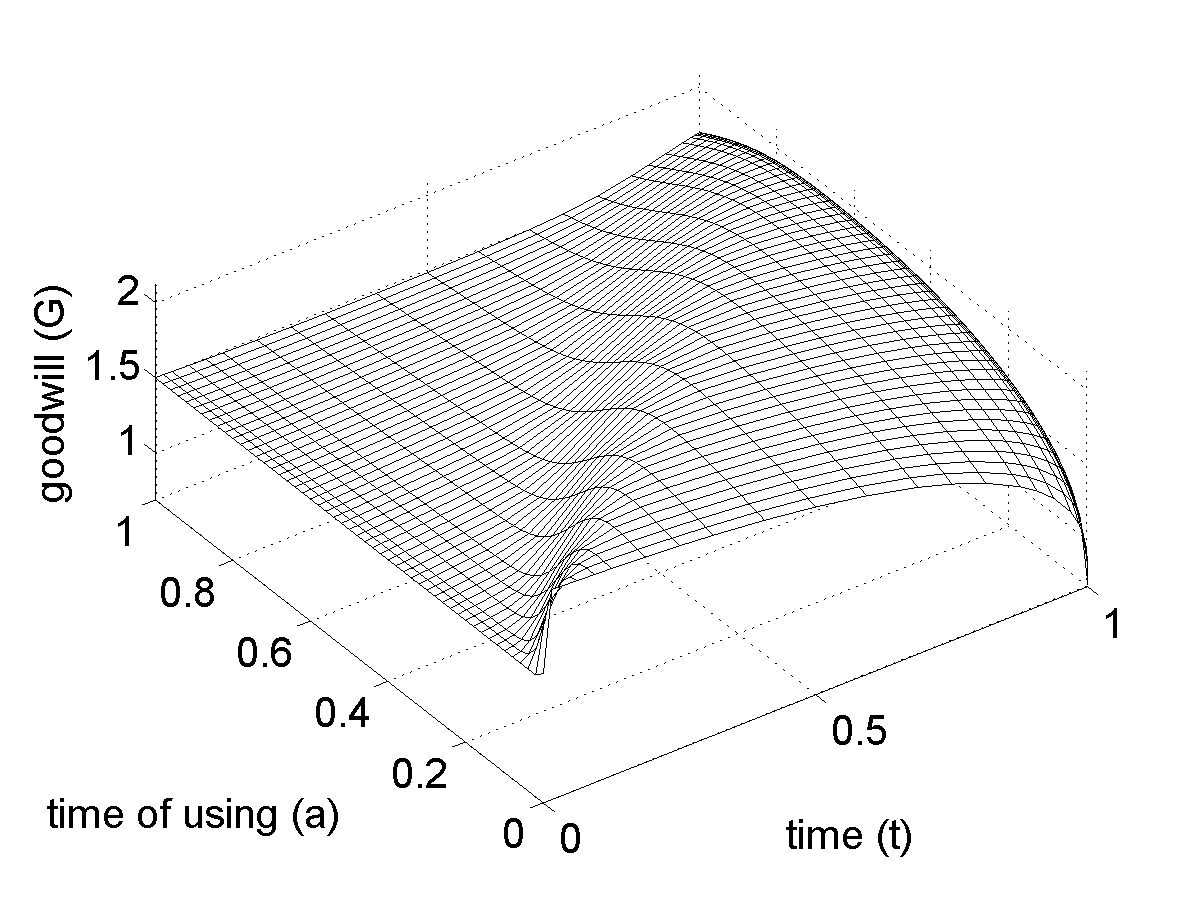}
\end{array}$
\end{center}
\caption{\textit{The optimal advertising strategy $u^*$ and optimal goodwill path $G^*$ in the experiment for $\epsilon_g=1$ and $\rho=0.5$.}}
\label{f1}
\end{figure}
%12
\begin{figure}[h!]
\begin{center}$
\begin{array}{cccc} \includegraphics[scale=0.2]{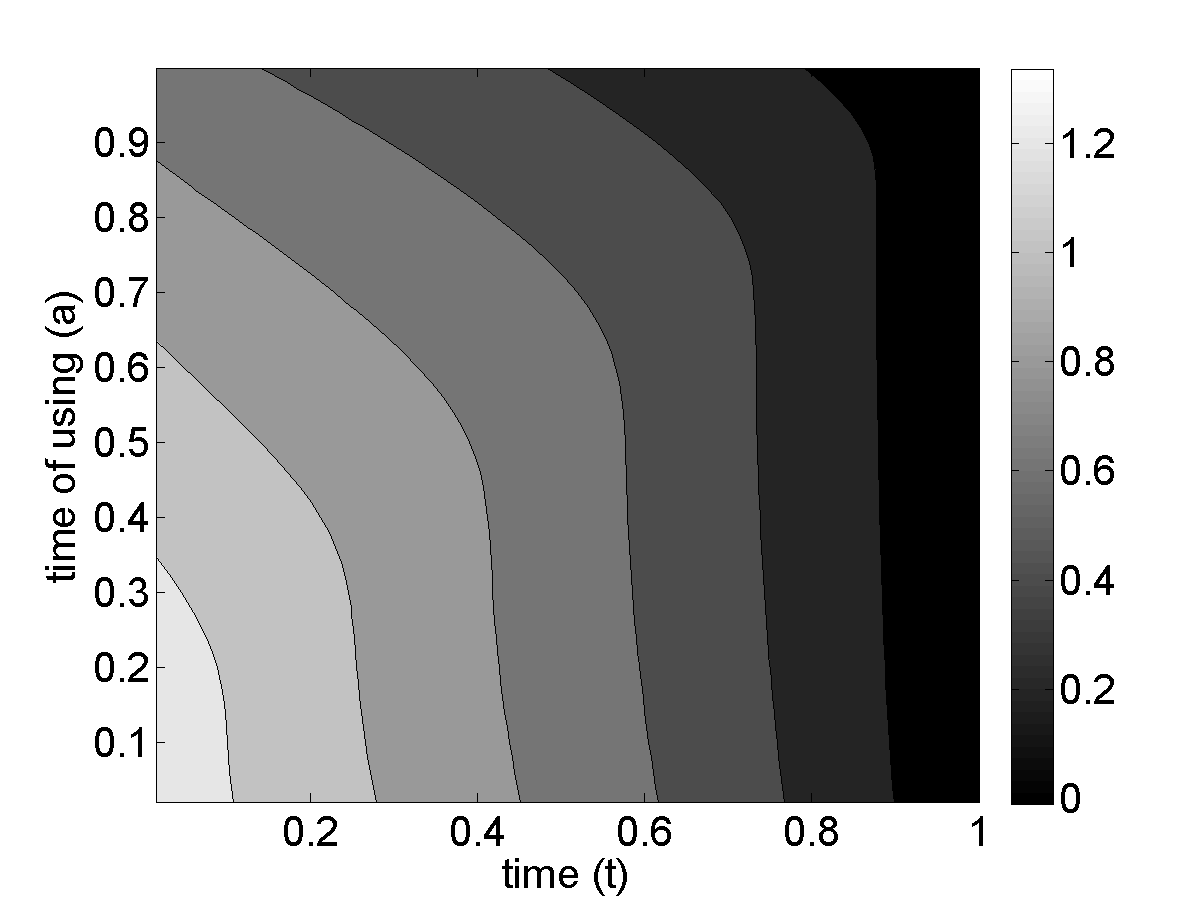} & \includegraphics[scale=0.2]{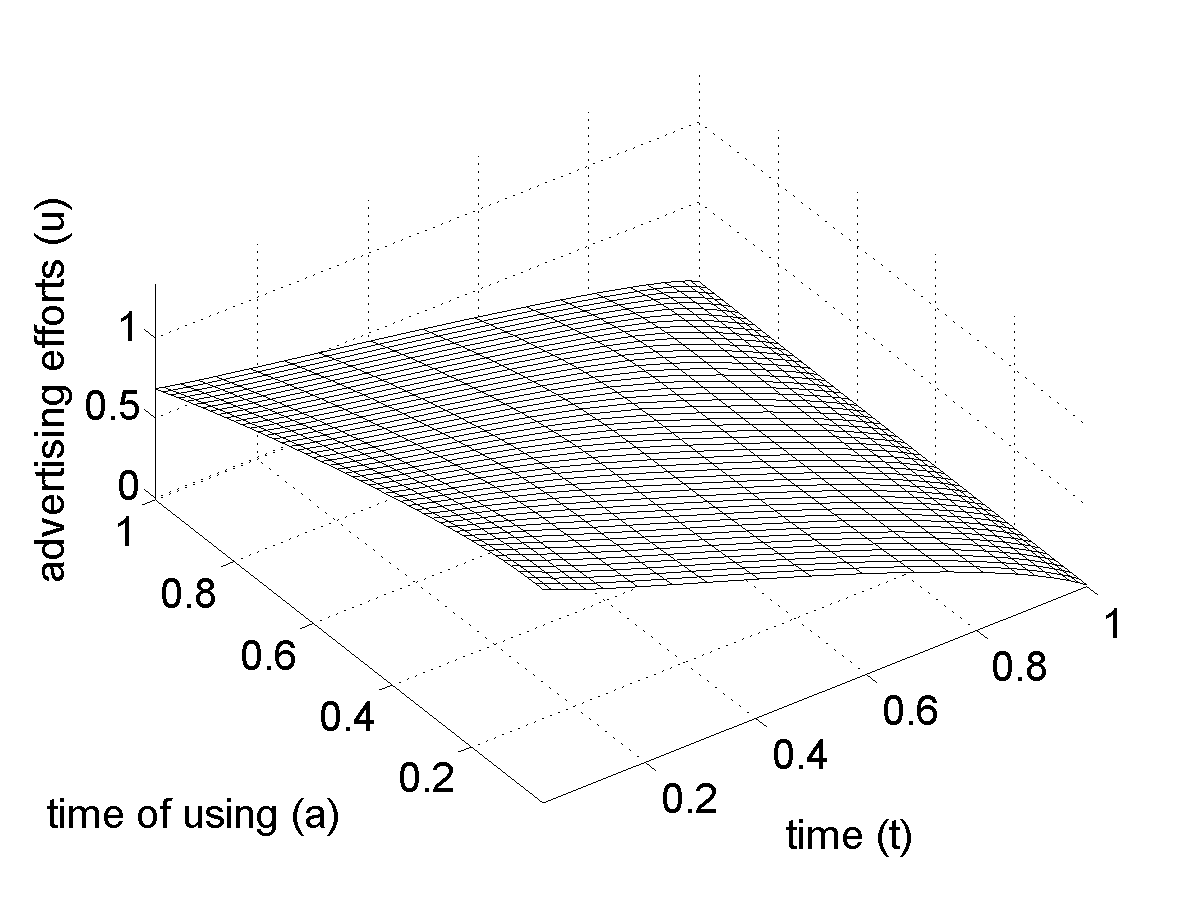} &
\includegraphics[scale=0.2]{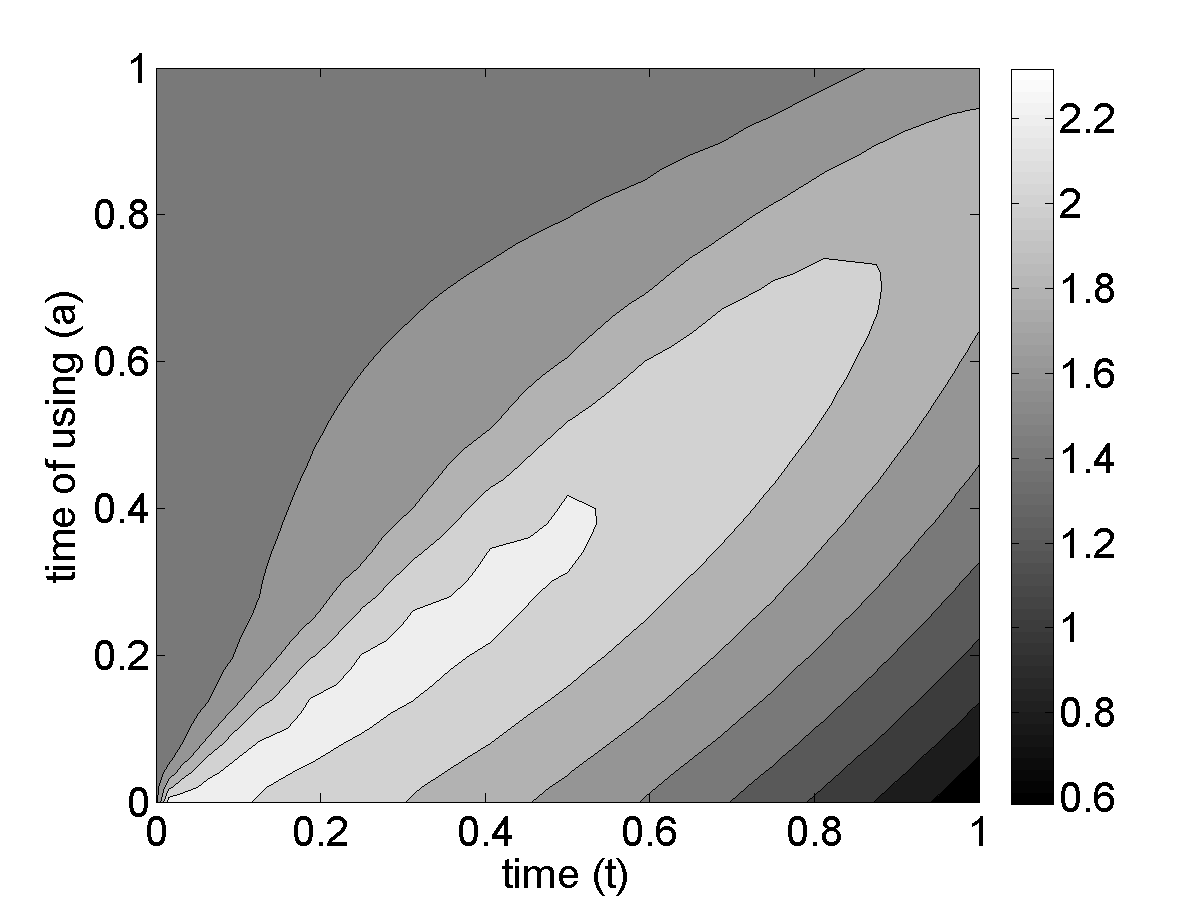} & \includegraphics[scale=0.2]{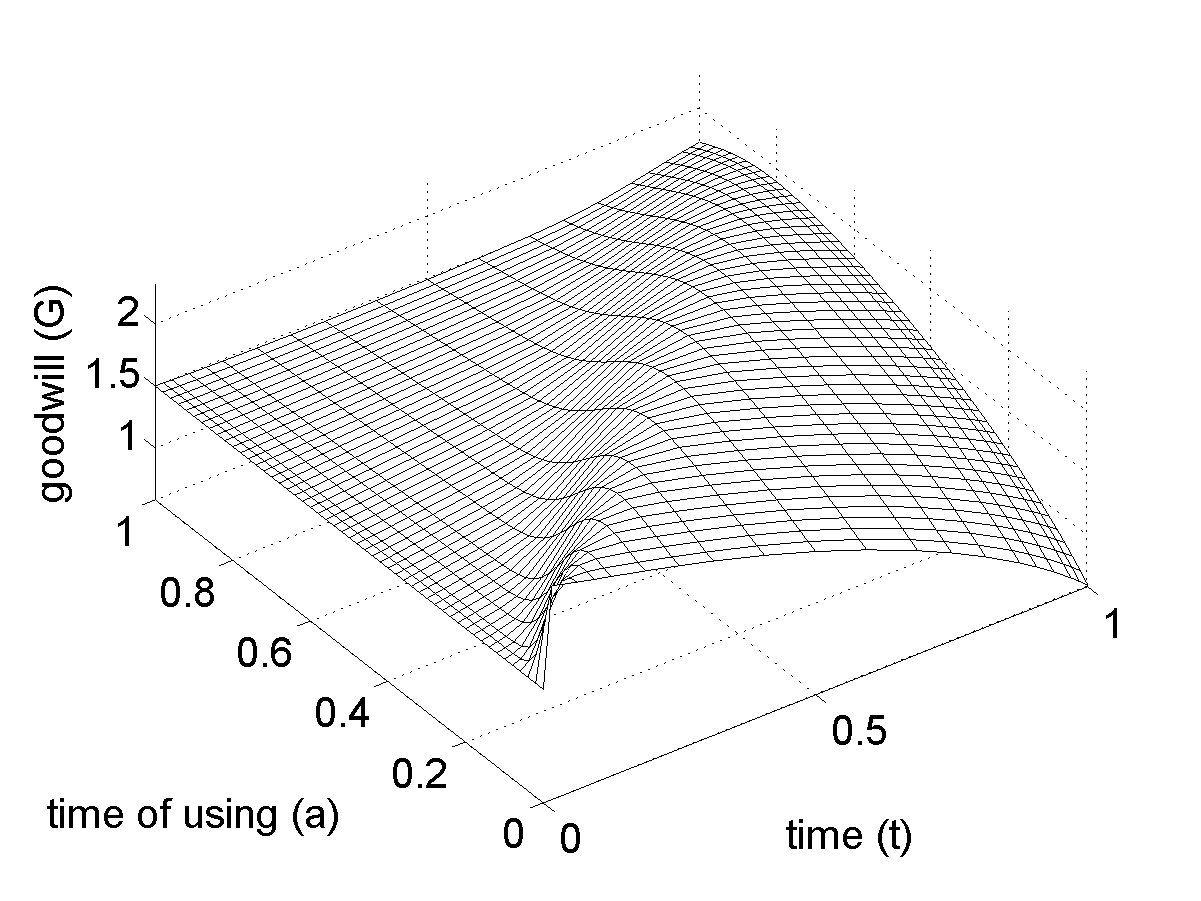}
\end{array}$
\end{center}
\caption{\textit{The optimal advertising strategy $u^*$ and optimal goodwill path $G^*$ in the experiment for $\epsilon_g=1$ and $\rho=1$.}}
\label{f2}
\end{figure}	

In both cases, the strengthening strategies have a concave decreasing shape in each market segment. This means that the maximum level of $u^*$ is reached at the beginning of the product life cycle and for consumers with shorter usage experience. However, there is a difference between the experiments with $\rho=0.5$ and $\rho=1$ in the maximum level of the optimal advertising strategies.  The greatest value of $u^*$ for the non-linear model is nearly one-half that for the linear  one (see Table \ref{tb:lq}), also resulting in a significant increase in the value of the total profit.

Finally, we compare several essential values obtained for the four scenarios with different model parameters $\rho$ and $\epsilon_g$. They are presented in Table \ref{tb:lq}. 

\begin{table}[h]
\centering
\begin{tabular}{|l|l|l|l|l|l|l|l|}\hline
$\rho$	&	$\epsilon_g$	&	$J_0$ & $J$ & $\Delta J/J_0$	&	$\max u$	&	$\max u_{0}$	 &	$\max G$	\\ \hline
0.5	&	0.1	&	0.31	&	0.318	&	3\%	&	0.175	&	0.11	&	1.55	\\\hline	
0.5	&	1	&	0.276	&	0.387	&	40\%	&	0.769	&	0.485	&	2.111	\\\hline	
1	&	0.1	&	0.31	&	0.313	&	1\%	&	0.217	&	0.108	&	1.5	\\\hline	
1	&	1	&	0.276	&	0.36	&	30\%	&	1.325	&	0.662	&	2.307	\\\hline	
\end{tabular}
\caption{\textit{Experiments for goods with low quality. $J_0$ is the firm's profit without advertising investment}.}
\label{tb:lq}
\end{table}

 The ratio  $\frac{\Delta J}{J_0}$ is a measure of the benefit from an advertising campaign and is equal to the percentage change of the firm's profits caused by introducing the optimal advertising campaign. Our simulations confirm that a low level of goodwill elasticity of demand causes  a small percentage increase in the firm's profits $\frac{\Delta J}{J_0}$ and that the advertising intensities and goodwill paths have significantly lower magnitudes  than in the case where $\epsilon_g=1$.

The above analysis highlights the importance of consumer recommendations, the levels of goodwill elasticity, and a non-linear advertising response function in creating an optimal  advertising campaign, and should be taken into account by managers.

\section*{Acknowledgements}
The authors gratefully acknowledge financial support from the National Science Centre in Poland. Decision number: DEC-2011/03/D/HS4/04269.

%\end{acknowledgements}

% BibTeX users please use one of
%\bibliographystyle{spbasic}      % basic style, author-year citations
%bibliographystyle{spmpsci}      % mathematics and physical sciences
%\bibliographystyle{spphys}       % APS-like style for physics
%\bibliography{}   % name your BibTeX data base
%\bibliographystyle{unsrtnat}
%\bibliographystyle{spmpsci}
%\section*{References}

%\bibliographystyle{ormsv080}
\bibliographystyle{elsarticle-harv}
\bibliography{literaturagood_a}

\begin{thebibliography}{41}
\expandafter\ifx\csname natexlab\endcsname\relax\def\natexlab#1{#1}\fi
\expandafter\ifx\csname url\endcsname\relax
  \def\url#1{\texttt{#1}}\fi
\expandafter\ifx\csname urlprefix\endcsname\relax\def\urlprefix{URL }\fi

\bibitem[{Agliari et~al.(2010)Agliari, Burioni, Cassi, and Neri}]{agliari2010}
Agliari, E., Burioni, R., Cassi, D., Neri, F.~M., 2010. Word-of-mouth and
  dynamical inhomogeneous markets: an efficiency measure and optimal sampling
  policies for the pre-launch stage. IMA Journal of Management Mathematics
  21~(1), 67--83.

\bibitem[{Almeder et~al.(2004)Almeder, Caulkins, Feichtinger, and
  Tragler}]{almeder2004}
Almeder, C., Caulkins, J.~P., Feichtinger, G., Tragler, G., 2004. An
  age-structured single-state drug initiation model—cycles of drug epidemics
  and optimal prevention programs. Socio-Economic Planning Sciences 38~(1),
  91--109.

\bibitem[{Anita(2000)}]{anita2000analysis}
Anita, S., 2000. Analysis and control of age-dependent population dynamics.
  Vol.~11. Springer.

\bibitem[{Bagwell(2007)}]{bagwell2007}
Bagwell, K., 2007. The economic analysis of advertising. Handbook of industrial
  organization 3, 1701--1844.

\bibitem[{Barucci and Gozzi(1999)}]{Barucci1999}
Barucci, E., Gozzi, F., 1999. Optimal advertising with a continuum of goods.
  Annals of Operations Research 88, 15--29.

\bibitem[{Barucci and Gozzi(2001)}]{barucci2001}
Barucci, E., Gozzi, F., 2001. Technology adoption and accumulation in a
  vintage-capital model. Journal of Economics 74~(1), 1--38.

\bibitem[{Brown and Reingen(1987)}]{Brown1987}
Brown, J., Reingen, P., 1987. Social ties and word-of-mouth referral behavior.
  Journal of Consumer Research 3, 350--362.

\bibitem[{Bruce et~al.(2012)Bruce, Foutz, and Kolsarici}]{bruce2012}
Bruce, N.~I., Foutz, N.~Z., Kolsarici, C., 2012. Dynamic effectiveness of
  advertising and word of mouth in sequential distribution of new products.
  Journal of Marketing Research 49~(4), 469 -- 486.

\bibitem[{Ca{\~n}ibano et~al.(2000)Ca{\~n}ibano, Garcia-Ayuso, and
  S{\'a}nchez}]{canibano2000}
Ca{\~n}ibano, L., Garcia-Ayuso, M., S{\'a}nchez, M.~P., 2000. The value
  relevance and managerial implications of intangibles: a literature review.
  Jounal of Accounting Literature 19, 102--130.

\bibitem[{Chan and Zhu(1989)}]{Chan1989}
Chan, W., Zhu, G., 1989. Optimal birth control of population dynamics. Journal
  of Mathematical Analysis and Applications 144~(2), 532--552.

\bibitem[{Da~Prato and Iannelli(1994)}]{daprato1994}
Da~Prato, G., Iannelli, M., 1994. Boundary control problem for age-dependent
  equations. Lecture Notes in Pure and Applied Mathematics. Marcel Dekker AG.

\bibitem[{Engel and Nagel(2000)}]{Engel2000}
Engel, K., Nagel, R., 2000. One-parameter semigroups for linear evolution
  equations. Vol. 194 of Graduate Texts in Mathematics. Springer-Verlag, New
  York.

\bibitem[{Engel and Nagel(2006)}]{Engel2006}
Engel, K.-J., Nagel, R., 2006. A short course on operator semigroups. Springer.

\bibitem[{Faggian and Grosset(2013)}]{Faggian2013}
Faggian, S., Grosset, L., 2013. Optimal advertising strategies with
  age-structured goodwill. Mathematical Methods of Operations Research 78~(2),
  259--284.

\bibitem[{Feichtinger et~al.(2006)Feichtinger, Hartl, Kort, and
  Veliov}]{feichtinger2006}
Feichtinger, G., Hartl, R.~F., Kort, P.~M., Veliov, V.~M., 2006. Capital
  accumulation under technological progress and learning: a vintage capital
  approach. European Journal of Operational Research 172~(1), 293--310.

\bibitem[{Feichtinger et~al.(2003)Feichtinger, Tragler, and
  Veliov}]{Feichtinger2003}
Feichtinger, G., Tragler, G., Veliov, V., 2003. Optimality conditions for
  age-structured control systems. Journal of Mathematical Analysis and
  Applications 288~(1), 47--68.

\bibitem[{Gautschi(1997)}]{gautschi1997}
Gautschi, W., 1997. Numerical Analysis: An Introduction. Birkhauser, Boston.

\bibitem[{Godes and Mayzlin(2004)}]{godes2004}
Godes, D., Mayzlin, D., 2004. Using online conversations to study word-of-mouth
  communication. Marketing Science 23~(4), 545--560.

\bibitem[{Gripenberg et~al.(1990)Gripenberg, Londen, and
  Staffans}]{gripenberg1990}
Gripenberg, G., Londen, S., Staffans, O., 1990. Volterra integral and
  functional equations. Cambridge University Press.

\bibitem[{Grosset and Viscolani(2005)}]{Grosset2005}
Grosset, L., Viscolani, B., 2005. Advertising for the introduction of an
  age-sensitive product. Optimal Control Applicattions and Methods 26,
  157--167.

\bibitem[{Jha et~al.(2009)Jha, Chaudhary, and Kapur}]{Jha2009}
Jha, P., Chaudhary, K., Kapur, P., 2009. Optimal advertising control policy for
  a new product in segmented market. OPSEARCH 46~(2), 225--237.

\bibitem[{Kamont(1999)}]{kamont1999}
Kamont, Z., 1999. Hyperbolic functional differential inequalities and
  applications. Springer.

\bibitem[{Kapferer(2012)}]{kapferer2012}
Kapferer, J.-N., 2012. The new strategic brand management: Advanced insights
  and strategic thinking. Kogan Page London and Philadelphia.

\bibitem[{Keller and Fay(2009)}]{keller2009}
Keller, E., Fay, B., 2009. The role of advertising in word of mouth. Journal of
  Advertising Research 49~(2), 154.

\bibitem[{Kurdila and Zabarankin(2005)}]{kurdila2005}
Kurdila, A.~J., Zabarankin, M., 2005. Convex functional analysis. Springer.

\bibitem[{Lasiecka and Triggiani(2000)}]{lasiecka2000control}
Lasiecka, I., Triggiani, R., 2000. Control Theory for Partial Differential
  Equations: Continuous and Approximation Theories. II, Abstract
  Hyperbolic-like Systems Over a Finite Time Horizon. Cambridge University
  Press.

\bibitem[{McDonald and Morris(2004)}]{Mcdonald2004}
McDonald, M., Morris, P., 2004. Marketing: a complete guide in pictures.
  Butterworth-Heinemann.

\bibitem[{Monahan(1984)}]{monahan1984}
Monahan, G.~E., 1984. Technical note—a pure birth model of optimal
  advertising with word-of-mouth. Marketing Science 3~(2), 169--178.

\bibitem[{Mosca and Viscolani(2004)}]{mosca2004}
Mosca, S., Viscolani, B., 2004. Optimal goodwill path to introduce a new
  product. Journal of optimization theory and applications 123~(1), 149--162.

\bibitem[{Murray(1991)}]{murray1991}
Murray, K.~B., 1991. A test of services marketing theory: consumer information
  acquisition activities. The Journal of Marketing 55~(1), 10--25.

\bibitem[{Nelson(1974)}]{nelson1974}
Nelson, P., 1974. Advertising as information. The Journal of Political Economy
  82~(4), 729--754.

\bibitem[{Nerlove and Arrow(1962)}]{Nerlove1962}
Nerlove, M., Arrow, J., 1962. Optimal advertising policy under dynamic
  conditions. Economica 29, 129--142.

\bibitem[{Park et~al.(1998)Park, Iannelli, Kim, and Anita}]{park1998optimal}
Park, E., Iannelli, M., Kim, M., Anita, S., 1998. Optimal harvesting for
  periodic age-dependent population dynamics. SIAM Journal on Applied
  Mathematics 58~(5), 1648--1666.

\bibitem[{Pazy(1983)}]{Pazy1983}
Pazy, A., 1983. Semigroups of linear operators and applications to partial
  differential equations. Vol.~44. Springer.

\bibitem[{Schiesser and Griffiths(2009)}]{Schiesser2009}
Schiesser, W., Griffiths, G., 2009. A Compendium of Partial Differential
  Equation Models: Method of Lines Analysis with Matlab. Cambridge University
  Press, New York, NY, USA.

\bibitem[{Shampine et~al.(2003)Shampine, Gladwell, and Thompson}]{shampine2003}
Shampine, L.~F., Gladwell, I., Thompson, S., 2003. Solving ODEs with MATLAB.
  Cambridge University Press.

\bibitem[{Trusov et~al.(2009)Trusov, Bucklin, and Pauwels}]{trusov2009}
Trusov, M., Bucklin, R.~E., Pauwels, K., 2009. Effects of word-of-mouth versus
  traditional marketing: Findings from an internet social networking site.
  Journal of Marketing 73~(5), 90--102.

\bibitem[{Veliov(2008)}]{veliov2008}
Veliov, V.~M., 2008. Optimal control of heterogeneous systems: Basic theory.
  Journal of Mathematical Analysis and Applications 346~(1), 227--242.

\bibitem[{Verwer and Sanz-Serna(1984)}]{verwer1984}
Verwer, J.~G., Sanz-Serna, J.~M., 1984. Convergence of method of lines
  approximations to partial differential equations. Computing 33~(3-4),
  297--313.

\bibitem[{Webb(1985)}]{Webb1985}
Webb, G., 1985. Theory of nonlinear age-dependent population dynamics. Vol.~89.
  CRC.

\bibitem[{Weber(2005)}]{weber2005}
Weber, T.~A., 2005. Infinite-horizon optimal advertising in a market for
  durable goods. Optimal Control Applications and Methods 26~(6), 307--336.

\end{thebibliography}
\end{document}